\definecolor{mahogany}{cmyk}{0, 0.77, 0.87, 0}
\definecolor{salmon}{cmyk}{0, 0.53, 0.38, 0}
\definecolor{melon}{cmyk}{0, 0.46, 0.50, 0}
\definecolor{yellowgreen}{cmyk}{0.44, 0, 0.74, 0}
\definecolor{brickred}{cmyk}{0, 0.89, 0.94, 0.28}
\definecolor{OliveGreen}{cmyk}{0.64, 0, 0.95, 0.40}
\definecolor{RawSienna}{cmyk}{0, 0.72, 1.0, 0.45}
\definecolor{ZurichRed}{rgb}{1, 0, 0} 
\numberwithin{equation}{section}
\newtheorem{thm}{Theorem}[section]        
\newtheorem{cor}{Corollary}[section]
\newtheorem{lem}{Lemma}[section]
\newtheorem{prop}{Proposition}[section]
\newtheorem{rmk}{Remark}[section]
\newtheorem{question}{Question}[section]
\newcommand{\R}{\mathbb{R}}                  
\newcommand{\Rd}{\R^d}               
\newcommand{\ioRd}{\int_{\Rd}}              
\newcommand{\iogRd}[1]{\int_{\R^{#1d}} }    
\newcommand{\set}[1]{ \left\{#1\right\} }
\newcommand{\mysum}[3]{\sum\limits_{#1=#2}^{#3}}          
\newcommand{\myprod}[3]{\prod\limits_{#1=#2}^{#3}}
\newcommand{\wh}[1]{\widehat{#1}}              
\newcommand{\abs}[1]{\left|#1\right|}
\newcommand{\mymax}[2]{\max\limits_{#1=1,..,#2}}
\newcommand{\tgo}{t\downarrow 0}
\newcommand{\F} {(-\Delta)^{ \frac{\alpha}{2} } }
\newcommand{\Prob} {\mathbb{P} }
\newcommand{\palp}{p_{t}^{(\alpha)}}
\newcommand{\pH}{ p_{t}^{H_{V}}} 
\begin{document}

\title[Heat content of Schr\"odingier operators on $\Rd$]{Heat content and small time asymptotics for Schr\"odinger operators on $\Rd$}
\author{Luis Acu\~na Valverde}\thanks{Both authors supported in part by NSF Grant
\#0603701-DMS, PI. R. Ba\~nuelos}
\address{Department of Mathematics, Purdue University, West Lafayette, IN 47907, USA}
\email{lacunava@math.purdue.edu}

\author{Rodrigo Ba\~nuelos}
\address{Department of Mathematics, Purdue University, West Lafayette, IN 47907, USA}
\email{banuelos@math.purdue.edu}

\maketitle


\begin{abstract} 
This paper studies  the {\it heat content} for  Schr\"odinger operators of the fractional Laplacian $\F$, $0<\alpha\leq 2$ in $\Rd$, $d\geq 1$.   Employing probabilistic and analytic techniques, a small time asymptotic expansion formula is given and the {\it ``heat content invariants"} are identified.    These results are new even in the case of the Laplacian, $\alpha=2$.  
\end{abstract}

\section{introduction}

Let $0<\alpha\leq 2$ and consider $X=\set{X_t}_{t\geq0}$ a rotationally  invariant  $\alpha$-stable process whose transition densities $p_t^{(\alpha)}(x,y)=p_t^{(\alpha)}(x-y)$ are uniquely determined by their Fourier transform (characteristic function) and which are given by 
\begin{equation}\label{CharF.Proc}
e^{-t\abs{\xi}^{\alpha}}=\mathbb{E}^{0}[e^{-\dot{\iota} \xi\cdot X_t }]=\ioRd e^{-\dot{\iota}y\cdot\xi}\palp(y)dy,
\end{equation}
for all $t>0$, $\xi\in\Rd$, $d\geq 1$. Henceforth, $\mathbb{E}^x$ will represent the expectation of the process  starting at $x$.

For the purposes of this paper, we need to take into consideration both the spectral and integral definition for the  infinitesimal generator associated to $X$, denoted here by $H_{\alpha}=\F$. In the spectral theoretic sense, $\F$ is a  positive and self--adjoint linear operator   with domain
$$\set{f \in L^2(\Rd ) : \abs{\xi}^{\alpha}\wh{f}(\xi)\in L^2(\Rd )}$$
satisfying
\begin{equation}\label{Fourierstable}
 \wh{\F}f(\xi) = |\xi|^{\alpha}\wh{f}(\xi),
\end{equation}
where $\wh{f}$ denotes the Fourier transform of $f$.  Moreover, for $f \in\mathcal{S}(\Rd)$, where 
$\mathcal{S}(\Rd)$ is the set of  rapidly decreasing smooth functions,
we have   
$$\F f(x)=\left.\frac{d}{dt} e^{-t\F }f(x)\right\vert_{t=0},$$ where 
$$e^{-t\F }f(x)=\mathbb{E}^x[f(X_t)]=\ioRd \palp(x,y)f(y)dy$$ is the heat semigroup generated by $X$. On the other hand,  $\F$ can also be 
expressed in the integral form 
\begin{equation*}\label{fractsmoothfunct}
\F f (x) = C_{d,\alpha}\ioRd \frac{f (x) -f(y)}{|x-y|^{d+\alpha}}dy,
\end{equation*}
where $C_{d,\alpha}>0$ is a normalizing constant and the integral is understood in  the principal value sense. The last expression  allows
us to rewrite  the Dirichlet form associated to $\F$(see \cite{Fuk} for further details) 
\begin{equation}\label{DirichletF}
\mathcal{E}_{\alpha}(f)=\langle \F f, f\rangle = \int_{\Rd}\F f(x)f(x)dx
\end{equation}
as
\begin{equation}\label{DirichletF2}
\mathcal{E}_{\alpha}(f)=\frac{C_{d,\alpha}}{2}\int_{\Rd}\int_{\Rd}
\frac{\abs {f (x) -f(y)}^2}{|x-y|^{d+\alpha}}dxdy.
\end{equation}
Notice  that  when $\alpha=2$, due to  integration by parts,
we have
\begin{equation}\label{Laplaceform}
\mathcal{E}_2(f)=\int_{\Rd}(-\Delta f)(x)f(x)dx=\int_{\Rd}\abs{\nabla f(x)}^2dx, 
\end{equation}
which is the classical Dirichlet form of the Laplacian.  

Let $V\in L^{\infty}(\Rd)\cap L^{1}(\Rd)$.  The linear operator
$H_V=\F +V$, known as the Fractional Schr\"{o}dinger operator, is self-adjoint and defined similarly as the infinitesimal generator of the heat semigroup,  
$$e^{-tH_V}f(x)=\mathbb{E}^x[e^{-\int_{0}^{t}V(X_s)ds}f(X_t)],$$
for $f \in \mathcal{S}(\Rd).$ The heat kernel of $e^{-tH_V}$ is given by the 
Feymann-Kac formula (see \cite{Hir}, \cite{Kal} and  \cite{Sim})
\begin{equation}\label{Fey.Kac.f}
\pH(x,y)=\palp(x,y)\mathbb{E}_{x,y}^t\left[e^{-\int_{0}^{t}
V(X_{s})ds}\right], 
\end{equation} 
where $\mathbb{E}_{x,y}^t$ denotes the expectation with respect to the stable process (bridge) starting at $x$ and conditioned to be at $y$ at time $t$. 

With $H_{\alpha}$, $H_V$ and their heat kernels properly introduced,
we now proceed to consider  the heat trace for Schr\"{o}dinger operators, which is defined by
\begin{equation*}
Tr\left(e^{-tH_{V}}-e^{-tH_{\alpha}}\right)=\int_{\Rd}\left(p_t^{H_V}(x,x)-p_t^{(\alpha)}(x,x)\right)dx.
\end{equation*}
We set
\begin{equation}\label{htRd}
\mathcal{T}_V^{
(\alpha)}(t)=\frac{Tr\left(e^{-tH_{V}}-e^{-tH_{\alpha}}\right)}{p_t^{(\alpha)}(0)}.
\end{equation}

The small time asymptotic expansion for the Schr\"odinger operator corresponding to the case $\alpha=2$  (that is, the behavior of the quantity $\mathcal{T}_V^{
(2)}(t)$ as $t\downarrow 0$) has been extensively studied in the literature for many years by many authors.  One reason for this interest is its  connections and applications to spectral and scattering theory.  It is well known that there is an asymptotic expansion in  powers of $t$  and that the coefficients, known as the {\it ``heat invariants"}, encode rich information on scattering poles and properties of the potential  $V$.  (See for example, van den Berg \cite{vanden1},  McKean and Moerbeke \cite{MckMoe} and Melrose \cite{Mel1, Mel2}.)  For this reason, there has been a great deal of interest in obtaining explicit expressions for the coefficients in the expansions under suitable (but general  enough) assumptions on the potentials.  Following these works, 
S\'a Barreto and the second author proved the existence of an asymptotic expansion as $\tgo$ and gave a formula for the {\it ``heat invariants"} in terms of quantities involving the Fourier transform of the potential $V \in\mathcal{S}(\Rd)$.  This expansion allows for the computation of several coefficients and this in turn gives information on scattering poles for the potential $V$; see \cite[pp. 2162--2163]{BanSab} for details.  Applications of the  techniques in \cite{BanSab} for Schr\"odinger operators over compact Riemannian manifolds are given in Donnelly \cite{Don}.  In \cite{BanYil}, Yildirim and the second author proved a second order expansion for \eqref{htRd} as $\tgo$ valid for  all $0<\alpha\leq 2$ by imposing  a H\"older continuous condition on the potential $V$ similar to that imposed in \cite{vanden1} for the case $\alpha=2$.  In \cite{Acu}, the first author combined the techniques in \cite{BanSab} with probabilistic techniques to derive a general small time asymptotic expansion for $\mathcal{T}_V^{(\alpha)}(t)$ valid  for all $0<\alpha\leq  2$.  While this expansion is similar to that in \cite{BanSab} for $\alpha=2$, the formula for the  {\it $\alpha$-heat invariants} for $0<\alpha<2$ contains some rather complicated probabilistic quantities that are quite difficult to compute. Nevertheless, the expansion in \cite{Acu} permits the computation of several {\it heat invariants}  for the general $\alpha's$  as in the case of \cite{BanSab} for $\alpha=2$.   


While by no means complete (many questions concerning scattering theory remain completely open) our current understanding of trace asymptotics for the Schr\"odinger operator for the fractional Laplacians has greatly improved in recent years.  This is in start contrast to trace asymptotic for the Dirichlet fractional Laplacian in smooth bounded domains of $\Rd$ where progress has been slow.  
If $D\subset \Rd$ is a domain of finite volume and $p_t^D(x, y)$ is the heat kernel for the Laplacian in $D$ with Dirichlet boundary condition, then the following quantity 
\begin{equation*}\label{tracedef}
Z_{D}(t)=\int_Dp_t^D(x,x)dx,
\end{equation*}
is known as the {\it heat trace} of the Dirichlet heat semigroup for the domain  $D$. As in the case of the Schr\"odinger semigroups on $\Rd$, this quantity has been extensively investigated in the literature.  In 1954, S. Minakshiusundaram \cite{Min} proved that if $D\subset\R^d$ is a bounded domain with  smooth boundary (his result is  for general manifolds with boundaries), then there are constants $c_j(D)$ such that for all $N\geq 3$,
\begin{equation}\label{min}
Z_D(t)=\frac{1}{(4\pi t)^{d/2}}\left\{|D| -\frac{\sqrt{\pi}}{2}|\partial D|\,t^{1/2}+\sum_{j=3}^N c{_j}(D) t^{j/2}+ \mathcal{O}(t^{{(N+1)}/2})\right\}, 
\end{equation} 
as $t\downarrow 0.$  The {\it heat invariants} $c{_j}(D)$ have also been the source of intense interest for many years, specially following the foundational work of M. Kac \cite{Kac} and McKean and Singer  \cite{MckSin}.  In \cite{BanKul} and \cite{BanKulSiu}, a second order expansion is computed for the Dirichlet fractional Laplacian valid for all $0<\alpha\leq 2$.  However, a general asymptotic expansion for the trace of fractional Laplacian similar to \eqref{min} remains an interesting open problem. 

There are other spectral functions whose asymptotic expansions similarly encode important geometric information for  the domain $D$. One of these is the  {\it heat content} defined by 
\begin{equation*}\label{heatcontent}
Q_D(t)=\int_{D}\int_{D}p_t^D(x,y)dydx.
\end{equation*}
It represents the total amount of heat in the domain $D$ by  time $t$.  Like the trace, it has an asymptotic expansion of the form 
\begin{equation}\label{heatcontentasymptotic}
Q_D(t)=\sum_{j=0}^N a_j(D)t^{j/2} + \mathcal{O}(t^{{N+1)}/2})
\end{equation}
and the first few coefficients ({called \it heat content invariants}) have been calculated.  (See van den Berg and Gilkey \cite{vandenGil} and van den Berg, Gilkey, Kirsten and Kozlov \cite{vandenGilKirKoz} for more on the expansion and the calculation of  coefficients.) 
The following result was proved by van den Berg and Le Gall in  \cite{van2} for smooth domains $D\subset \R^d$, $d\geq 2$. 


\begin{equation}\label{VandenberHeatContBM}
Q_{D}(t)=|D|-\frac{2}{\sqrt{\pi}}|\partial D|t^{1/2}+\left(2^{-1}(d-1)\int_{\partial D}H(s)ds\right) t +\mathcal{O}(t^{3/2}),  
\end{equation}
as $t\downarrow 0$.  Here, $H(s)$ denotes the mean curvature at the point $s\in \partial D$.  
For more on the heat content asymptotics and its connections to the eigenvalues (spectrum)  of the Laplacian in the domain $D$, we refer the reader to van den Berg, Dryden and Kappeler \cite{vandenDryKap} and the many references to the literature contained therein.

\begin{question}
Is there an expansion similar to \eqref{heatcontentasymptotic} for  the Dirichlet semigroup of stable processes for  smooth bounded domains $D$, and can one compute the first few coefficients? In particular, is there a version of \eqref{VandenberHeatContBM}  for stable processes?   
\end{question}
 As of now, these too remain challenging open questions.  Even obtaining a second order asymptotic seems to be very challenging.   We remark that the tools to show the existence of the asymptotic expansion  \eqref{VandenberHeatContBM} depend strongly  on the fact that Brownian motion $B=\set{B_t}_{t\geq0}$ on  $\R^d$ is obtained by taking 
$d$--independent copies of a 1-dimensional Brownian motion as its coordinates.  This facilitates  many calculations in the above expansions,  often reducing matters to one dimensional problems; see for example \cite{vanden2}.  Unfortunately, these type of arguments completely fail for stable processes.  To understand more what these difficulties entail, we refer the reader to \cite{BanKul} and \cite{BanKulSiu} where similar issues have to be confronted for trace asymptotics. 

The above mentioned results on the trace of Schr\"odinger operators on $\Rd$ and those for the Dirichlet semigroup motivate the study of what we will call  {\it ``the heat content for Schr\"odinger semigroups"} and which we define by 
\begin{align}\label{heatcontent}
Q_V^{(\alpha)}(t)&=\ioRd\ioRd \set{\pH(x,y)-\palp(x,y)}dxdy \\ \nonumber\\\nonumber 
     &=\ioRd \ioRd \palp(x,y)\mathbb{ E}^{t}_{x,y}\left[ e^{-\int_{0}^{t}V(X_s)ds }-1\right] dxdy.
\end{align}
Notice that the second equality comes from \eqref{Fey.Kac.f}.  To the best of our knowledge, this quantity has not been studied in the literature before even in the case of the Laplacian.  

Before stating our results, we elaborate further on the name {\it``heat content"}.   Recall that the heat kernels for the semigroups $e^{-tH_{\alpha}}$ and  $e^{-tH_V}$ of the operators $\F$ and $\F +V$, respectively, 
satisfy the heat equations
\begin{equation*}
\frac{d}{dt}\palp(x,y)=-(-\Delta)_{x}^{\frac{\alpha}{2}}\palp(x,y),
\,\, t>0,\,\, (x,y)\in\R^{2d},
\end{equation*}
with initial condition
$$
 p_{0}^{(\alpha)}(x,y)= \delta(x-y), 
 $$
and
\begin{equation*}
\frac{d}{dt}\pH(x,y)=-[(-\Delta)_{x}^{\frac{\alpha}{2}} + V(x)]\pH(x,y), \,\, t>0,\,\, (x,y)\in\R^{2d}
\end{equation*} 
with $$p_{0}^{H_{V}}(x,y)= \delta(x-y).$$
Consequently, the function $$u(t,x,y)=\pH(x,y)-\palp(x,y)$$ satisfies
\begin{equation*}
\frac{d}{dt}u(t,x,y)= -(-\Delta)_x^{\alpha/2}u(t,x,y)-V(x)\pH(x,y), \,\,\,  (t, x, y)\in (0,+\infty)\times\R^{2d}
\end{equation*} 
with initial condition 
$$u(0,x,y)=0, \,\,\, (x, y)\in  \R^{2d}.
$$

From $\eqref{Fey.Kac.f}$, we observe that when $V\leq 0$, we have 
$u(t,x,y)\geq 0$ so that we can interpret $u(t,x,y)$ as a temperature function that reflects the excess of heat generated by the potential $V$ at time $t>0$ at the point $(x,y)$. Similarly, when $V\geq 0$, $u(t,x,y)\leq 0$ , which can be interpreted as a loss of heat.
Likewise, $$U(t,x)=\int_{\Rd}u(t,x,y)dy$$ can be regarded as a temperature function defined on $\Rd$ which lets us interpret $Q_V^{(\alpha)}(t)$ as the amount of heat that the Euclidean space $\R^{d}$ has gained, or lost, by time $t>0$ with respect to the potential $V$. One of our goals is to compare the expansion as $\tgo$ for the heat content $Q_V^{(\alpha)}(t)$ to that of  $\mathcal{T}_V^{(\alpha)}(t)$ proved 
in \cite{BanSab} ($\alpha=2$ case), \cite{BanYil} and \cite{Acu} ($0<\alpha\leq 2$ case). 

 We proceed to state our main results. The first  two theorems correspond to the results for the trace proved in  \cite{vanden1} for $\alpha=2$ and in \cite{BanYil} for $0<\alpha<2$.   The first  theorem  provides  the first term whereas the second theorem yields a second  order expansions under the assumption of  a H\"older continuity on the potential $V$.  Both theorems provide uniform bounds for the remainder term  for all positive times.

\begin{thm}\label{1term}
\hspace*{20mm}
\begin{enumerate}
\item[(i)] Assume $V\in L^{\infty}(\Rd)\cap L^{1}(\Rd)$. Then if  $V:\Rd \rightarrow (-\infty,0]$, we have for all $t>0$ that
$$-t\ioRd V(x)dx\leq Q_V^{(\alpha)}(t) \leq -t\ioRd V(x)dx \left( 1+\frac{1}{2}t||V||_{\infty}e^{t||V||_{\infty}}\right).$$
The last inequality implies that
\begin{equation*}
Q_V^{(\alpha)}(t)=-t\ioRd V(x)dx + \mathcal{O}(t^2), 
\end{equation*}
as $\tgo$.
\bigskip

\item[(ii)]For $V\in L^{\infty}(\Rd)\cap L^{1}(\Rd)$, we obtain for all $t>0$
$$\abs{Q_V^{(\alpha)}(t)+t\ioRd V(x)dx}\leq t^2||V||_1||V||_{\infty}e^{t||V||_{\infty}}. $$
In particular, 
$$Q_V^{(\alpha)}(t)=-t\ioRd V(x)dx+ \mathcal{O}(t^2),$$ as $\tgo$.
\end{enumerate}
\end{thm}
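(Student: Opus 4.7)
The plan is to start from the Feynman--Kac representation in \eqref{heatcontent} and Taylor-expand $e^{-u}=1-u+R(u)$ with $R(u):=e^{-u}-1+u$. The inequality $e^x\geq 1+x$ gives $R(u)\geq 0$ for every $u\in\R$, while Taylor's theorem with Lagrange remainder yields $R(u)\leq \tfrac12 u^2 e^{\max(0,-u)}$. Applied pathwise to $u=\int_0^t V(X_s)\,ds$, whose absolute value is bounded by $t\|V\|_\infty$, this gives
\[
0\leq R\Bigl(\int_0^t V(X_s)\,ds\Bigr)\leq \frac{e^{t\|V\|_\infty}}{2}\Bigl(\int_0^t V(X_s)\,ds\Bigr)^{2},
\]
so \eqref{heatcontent} splits $Q_V^{(\alpha)}(t)$ into a linear-in-$V$ term and a nonnegative remainder controlled by an expected square.

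Next I would identify the linear term as $-t\ioRd V(x)\,dx$. By Fubini together with the fact that the marginal density at an intermediate time $s\in(0,t)$ of the $\alpha$-stable bridge from $x$ to $y$ in time $t$ is $p_s^{(\alpha)}(x,z)p_{t-s}^{(\alpha)}(z,y)/\palp(x,y)$, one has
\[
\palp(x,y)\,\mathbb{E}_{x,y}^{t}\!\Bigl[\int_0^t V(X_s)\,ds\Bigr]=\int_0^t\ioRd p_s^{(\alpha)}(x,z)\,p_{t-s}^{(\alpha)}(z,y)V(z)\,dz\,ds.
\]
Integrating in $x$ and $y$ and using that the stable kernel integrates to $1$ in either argument (symmetry plus conservativity of the free stable semigroup) collapses the right-hand side to $t\ioRd V(z)\,dz$, as desired.

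For the remainder I would run the same calculation on the second moment. Expanding the square via Fubini, restricting to $0\leq r\leq s\leq t$ (picking up a factor $2$ by symmetry), and using the two-time bridge density $p_r^{(\alpha)}(x,z_1)p_{s-r}^{(\alpha)}(z_1,z_2)p_{t-s}^{(\alpha)}(z_2,y)/\palp(x,y)$, the $x$ and $y$ integrations kill the outer kernels and yield
\[
\ioRd\ioRd\palp(x,y)\mathbb{E}_{x,y}^{t}\!\Bigl[\Bigl(\int_0^t V(X_s)\,ds\Bigr)^{2}\Bigr]dx\,dy=2\int_0^t\int_0^s\ioRd\ioRd V(z_1)V(z_2)p_{s-r}^{(\alpha)}(z_1,z_2)\,dz_1\,dz_2\,dr\,ds.
\]
Bounding one factor of $V$ by $\|V\|_\infty$ and using $\ioRd p_{s-r}^{(\alpha)}(z_1,z_2)\,dz_1=1$ shows the inner double integral is at most $\|V\|_\infty\|V\|_1$, so the full quantity is at most $t^2\|V\|_\infty\|V\|_1$. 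Combined with the Taylor bound above, this yields $\bigl|Q_V^{(\alpha)}(t)+t\ioRd V(x)\,dx\bigr|\leq\tfrac12 t^2\|V\|_\infty\|V\|_1 e^{t\|V\|_\infty}$, which is part (ii) with a factor $1/2$ to spare.

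Part (i) will then follow by sign considerations. For $V\leq 0$ one has $u=\int_0^t V(X_s)\,ds\leq 0$ pathwise, so $e^{-u}-1\geq -u$ integrates directly to $Q_V^{(\alpha)}(t)\geq -t\ioRd V(x)\,dx$, while the upper bound is the estimate just proved rewritten using $\|V\|_1=-\ioRd V(x)\,dx$. I do not foresee a deep obstacle; the two points requiring care are (a) keeping the sign of $R$ correct so that the absolute value of $u$ enters only through the prefactor $e^{t\|V\|_\infty}$, and (b) the double-bridge identity, which collapses cleanly via Chapman--Kolmogorov and mass preservation of the free stable semigroup.
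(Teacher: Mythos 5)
Your argument is correct and rests on the same ingredients the paper uses: the Feynman--Kac representation, a pathwise Taylor expansion of $e^{-u}$, and the collapse of the bridge marginals via mass conservation. The one place where you genuinely deviate is in bounding the remainder: you use the Lagrange form $e^{-u}-1+u=\tfrac12u^{2}e^{\xi}\le\tfrac12 u^{2}e^{t\|V\|_{\infty}}$ and then evaluate the expected square exactly via the two-time bridge density, giving $\iogRd{2}\palp(x,y)\mathbb{E}^t_{x,y}[u^2]\,dx\,dy\le t^2\|V\|_1\|V\|_\infty$. The paper instead bounds the $k$-th order term by $(t\|V\|_\infty)^{k-1}\int_0^t|V(X_s)|\,ds$ and sums the full series, which only ever invokes the first-moment identity. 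Both are sound; yours is a touch cleaner and yields the stronger constant $\tfrac12 t^2\|V\|_1\|V\|_\infty e^{t\|V\|_\infty}$ in part (ii) (the paper gets the same constant in (i) but a factor $1$ in (ii)). For part (i), your route of deducing the upper bound from the (ii) estimate with $\|V\|_1=-\int V$ works, whereas the paper proves (i) independently via the scalar inequality $-a\le e^{-a}-1\le -a(1+\tfrac12 b e^b)$ for $-b\le a<0$, which avoids the second-moment computation altogether. No gaps; just a modest variation in how the quadratic remainder is controlled.
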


\begin{thm}\label{Holdercont}
Suppose $V\in L^{\infty}(\Rd)\cap L^{1}(\Rd)$. Assume that $V$ is also uniformly H\"{o}lder continuous of order $\gamma$.  That is, there exists a positive constant $M$ such that $\abs{V(x)-V(y)}\leq M|x-y|^{\gamma}$, for all $x,y \in \Rd$, with 
$0<\gamma<\min\set{1,\alpha}$, $0<\alpha\leq 2$. Then, for all $t>0$
$$ \abs{ Q_V^{(\alpha)}(t)+t \ioRd V(x)dx-\frac{t^2}{2}\ioRd V^2(x) dx}\leq 
C(\gamma,\alpha)||V||_1 \left( ||V||_{\infty}^2 e^{ t||V||_{\infty}}t^{3}+t^{\frac{\gamma}{\alpha}+2}\right).$$
In particular,
\begin{equation*}
Q_V^{(\alpha)}(t)=-t \ioRd V(x)dx+\frac{t^2}{2}\ioRd V^2(x) dx +\mathcal{O}(t^{\frac{\gamma}{\alpha}+2}),
\end{equation*}
as $\tgo$.
\end{thm}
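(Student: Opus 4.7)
The plan is to start from the Feynman--Kac representation in \eqref{heatcontent} and use a third--order Taylor expansion of the exponential. Writing $A_t = \int_0^t V(X_s)\,ds$, Lagrange's form of the remainder gives
\begin{equation*}
e^{-A_t}-1 = -A_t + \tfrac{1}{2} A_t^2 + R_t, \qquad |R_t|\leq \tfrac{1}{6}|A_t|^3 e^{|A_t|}.
\end{equation*}
Inserting this decomposition into \eqref{heatcontent} reduces the theorem to three calculations: the contribution of $-A_t$ should produce $-t\int V$ exactly, that of $A_t^2/2$ should produce $\tfrac{t^2}{2}\int V^2$ modulo the $t^{\gamma/\alpha+2}$ error, and the contribution of $R_t$ should yield the $t^3$ error.

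The main tool for the first two contributions is the bridge identity: for $0<s_1<\cdots<s_k<t$,
\begin{equation*}
\palp(x,y)\,\mathbb{E}_{x,y}^{t}\!\left[\prod_{i=1}^k V(X_{s_i})\right] = \int_{\R^{kd}} \prod_{i=1}^k V(z_i)\, p_{s_1}^{(\alpha)}(x,z_1)\prod_{i=2}^k p_{s_i-s_{i-1}}^{(\alpha)}(z_{i-1},z_i)\, p_{t-s_k}^{(\alpha)}(z_k,y)\,dz.
\end{equation*}
Integrating in $x$ and $y$ and using $\int p_u^{(\alpha)}(x,z)\,dx = \int p_u^{(\alpha)}(z,y)\,dy = 1$, the outer kernels collapse to $1$. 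For $k=1$ this gives $\ioRd\ioRd \palp(x,y)\mathbb{E}_{x,y}^{t}[-A_t]\,dx\,dy = -t\ioRd V(z)\,dz$. For $k=2$, writing $A_t^2 = 2\int_0^t\!\int_0^s V(X_s)V(X_r)\,dr\,ds$ reduces the $A_t^2$ contribution to
\begin{equation*}
\int_0^t\!\int_0^s I(s-r)\,dr\,ds, \qquad I(u) := \ioRd\ioRd V(z_1)V(z_2)\,p_u^{(\alpha)}(z_1,z_2)\,dz_1\,dz_2.
\end{equation*}

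The H\"older hypothesis enters at this step. Using $\int p_u^{(\alpha)}(z_1,z_2)\,dz_2 = 1$, I would write
\begin{equation*}
I(u) - \ioRd V^2(z)\,dz = \ioRd V(z_1)\!\int_{\Rd}[V(z_2)-V(z_1)]\,p_u^{(\alpha)}(z_1,z_2)\,dz_2\,dz_1,
\end{equation*}
so that $|I(u)-\int V^2|\leq M\|V\|_1\,\mathbb{E}^{0}[|X_u|^{\gamma}] = M\|V\|_1\,\mathbb{E}^{0}[|X_1|^{\gamma}]\,u^{\gamma/\alpha}$ by the scaling $X_u \eid u^{1/\alpha} X_1$. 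The moment $\mathbb{E}^{0}[|X_1|^\gamma]$ is finite precisely because $\gamma<\alpha$, which is where the constraint on $\gamma$ is used. Integrating the bound over $0<r<s<t$ produces the claimed $\mathcal{O}(t^{\gamma/\alpha+2})$ error, and the comparison with $\int_0^t\!\int_0^s dr\,ds = t^2/2$ extracts the main term $\tfrac{t^2}{2}\int V^2$.

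For the remainder $R_t$, the naive bound $|R_t|\leq \tfrac{1}{6}t^3\|V\|_\infty^3 e^{t\|V\|_\infty}$ is fatal because $\palp(x,y)$ is \emph{not} integrable on $\R^{2d}$. To keep an $\|V\|_1$ factor, I would instead bound
\begin{equation*}
|A_t|^3 \leq t^2\|V\|_\infty^2 \int_0^t |V(X_s)|\,ds,
\end{equation*}
absorb $e^{|A_t|}$ by $e^{t\|V\|_\infty}$, and re-apply the $k=1$ bridge identity to the surviving integrand; this converts $\int_0^t|V(X_s)|\,ds$ into $t\|V\|_1$ after integrating in $(x,y)$, producing the bound $C\|V\|_1\|V\|_\infty^2 e^{t\|V\|_\infty} t^3$. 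The main obstacle throughout is exactly this bookkeeping: every estimate must keep at least one factor of $V$ alive inside the bridge expectation in a form that becomes $\|V\|_1$ after $(x,y)$--integration, since without such a factor the nonintegrability of $\palp$ on $\R^{2d}$ destroys all uniform estimates.
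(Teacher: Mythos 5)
Your argument is correct and follows essentially the same route as the paper: Taylor-expand the Feynman--Kac exponential to second order, use the stable-bridge marginal identity (the paper's Lemma~\ref{Qbridge} / \eqref{GeneralTerm}) to evaluate the $k=1$ and $k=2$ contributions, extract the main term $\tfrac{t^2}{2}\int V^2$ and the $t^{\gamma/\alpha+2}$ error via H\"older continuity together with the scaling $X_u \eid u^{1/\alpha}X_1$ and finiteness of $\mathbb{E}^0[|X_1|^\gamma]$, and control the remainder by keeping one factor of $V$ alive so that the $(x,y)$--integration yields $\|V\|_1$. The only cosmetic difference is that the paper truncates the full exponential series (Lemma~\ref{Qbridge}) rather than invoking Lagrange's form of the remainder, but the resulting bound and the bookkeeping you identify as the crux (the nonintegrability of $\palp$ on $\R^{2d}$) are identical.
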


It is interesting to note here  that in \cite{BanYil}, it is shown that
\begin{equation*}
\mathcal{T}_V^{(\alpha)}(t)=-t\int_{\Rd}V(\theta)d\theta + \frac{t^2}{2}\int_{\Rd}V^2(\theta)d \theta + \mathcal{O}(t^{\frac{\gamma}{\alpha}+2}),
\end{equation*}
as $\tgo$ under the same conditions of Theorem \ref{Holdercont}. Thus, under the assumption of H\"older continuity we cannot distinguish between $Q_V^{(\alpha)}(t)$ and $\mathcal{T}_V^{(\alpha)}(t)$,  as $\tgo$ as the scone order asymptotic expansion.  In order to see the difference in these quantities for $t\downarrow 0$,  we need to assume extra regularity conditions on $V$ and go further in the expansion. 

Our third result in this paper is a general asymptotic expansion in powers of $t$ for potentials $V\in \mathcal{S}(\Rd)$ with an explicit form for the coefficients.  In order to avoid the introduction of more complicated notation at this point, we postpone the result to Theorem \ref{Generalexpansion} in \S \ref{sec:GeneralCoef}.  A special case of Theorem \ref{Generalexpansion} where we can compute quite explicitly all the coefficients is the following theorem.
 
\begin{thm}\label{5termexp} Let $V\in \mathcal{S}(\Rd)$ and  $0<\alpha\leq 2$.
Then 
\begin{align*}
Q_V^{(\alpha)}(t)=&-t\ioRd V(\theta)d\theta +\frac{t^2}{2!}\ioRd V^2(\theta)d\theta
-\frac{t^3}{3!}\left( \ioRd V^3(\theta)d\theta +\mathcal{E}_{\alpha}(V)\right)\\
&+\frac{t^4}{4!}\left(\ioRd V^4(\theta)d\theta+2\ioRd V^2(\theta)\F V(\theta)d\theta +\int_{\Rd}\abs{\F V(\theta)}^2d\theta\right)\\
&-\frac{t^5}{5!}\Biggl(\int_{\Rd}V^5(\theta)d\theta+
2\int_{\Rd}V^3(\theta)\F V(\theta)d\theta 
+2\int_{\Rd}V^2(\theta)\F_2 V(\theta)d\theta \\ 
&+\int_{\Rd}V(\theta)\abs{\F V(\theta)}^{2}d\theta + \mathcal{E}_{\alpha}\left(\F V\right)+\mathcal{E}_{\alpha}\left( V^2\right)\Biggr)+ \mathcal{O}(t^6),
\end{align*}
as $\tgo$. Here, $\mathcal{E}_{\alpha } $ is the Dirichlet form as defined  in \eqref{DirichletF2} and \eqref{Laplaceform} whereas $\F_2$ is defined to be
$\F \circ \F.$ 
\end{thm}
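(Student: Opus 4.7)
The plan is to expand $Q_V^{(\alpha)}(t)$ via the Dyson--Duhamel series and collect terms up to order $t^5$. Iterating Duhamel's formula for $e^{-tH_V}$ and integrating in $x$ and $y$, the first and last kernels drop out (because $\ioRd p_s^{(\alpha)}(z,w)\,dz=1$) and, after substituting $\tau_i=s_{i+1}-s_i$ and integrating out the dummy $\tau_0=s_1$ over the interval $(0,t-\tau_1-\cdots-\tau_{n-1})$, one obtains
\begin{equation*}
Q_V^{(\alpha)}(t)=\sum_{n=1}^{\infty}(-1)^n J_n(t),\qquad J_n(t)=\int_{S_{n-1}(t)}\Big(t-\sum_{i=1}^{n-1}\tau_i\Big)\langle V,\Phi_n(\tau)\rangle\,d\tau,
\end{equation*}
where $S_{n-1}(t)=\{\tau\in\R^{n-1}:\tau_i>0,\,\sum\tau_i<t\}$, $\Phi_n(\tau)=e^{-\tau_1 H_\alpha}(Ve^{-\tau_2 H_\alpha}(V\cdots Ve^{-\tau_{n-1}H_\alpha}V))$, and $J_1(t)=t\ioRd V(x)\,dx$.

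Since $V\in\mathcal{S}(\Rd)$ is stable under pointwise multiplication and under iterates of $H_\alpha$, each $\Phi_n(\tau)$ admits a multivariate Taylor expansion in the $\tau_i$'s with an $L^\infty$-bounded remainder. Inserting this expansion into $J_n$ and evaluating every monomial integral via the Dirichlet beta identity
\begin{equation*}
\int_{S_{n-1}(t)}\tau_1^{a_1}\cdots\tau_{n-1}^{a_{n-1}}\Big(t-\sum_{i=1}^{n-1}\tau_i\Big)^{a_0}d\tau=\frac{a_0!\,a_1!\cdots a_{n-1}!}{(a_0+a_1+\cdots+a_{n-1}+n-1)!}\,t^{a_0+a_1+\cdots+a_{n-1}+n-1}
\end{equation*}
shows that each contribution to $J_n$ has $t$-degree at least $n$. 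Hence only $n\le 5$ affect the expansion through $t^5$: the constant Taylor term in $J_n$ produces the diagonal piece $\frac{(-t)^n}{n!}\ioRd V^n(x)\,dx$, while higher-order Taylor terms produce the mixed monomials.

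The last step is to recognize the mixed monomials as the quantities in the statement. Using self-adjointness of $H_\alpha$ on $\mathcal{S}(\Rd)$ one has $\langle V,H_\alpha V^2\rangle=\ioRd V^2\F V\,dx$, $\langle V,H_\alpha(VH_\alpha V)\rangle=\ioRd V|\F V|^2\,dx$, $\langle V,VH_\alpha^2 V\rangle=\ioRd V^2\F_2 V\,dx$, together with $\mathcal{E}_\alpha(V^k)=\langle V^k,H_\alpha V^k\rangle$ and $\mathcal{E}_\alpha(\F V)=\langle V,H_\alpha^3 V\rangle$. Symmetry of $S_{n-1}(t)$ under permutation of the $\tau_i$ is the crucial tool: it forces Taylor monomials that differ only by permutations to receive equal Dirichlet weights, so the several contributions collapse into the six groupings of the statement with prefactors $1/k!$. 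For illustration, at order $t^5$: $J_2$ contributes $-\frac{t^5}{5!}\mathcal{E}_\alpha(\F V)$ (from its cubic Taylor term), $J_3$ contributes $-\frac{t^5}{5!}(2\ioRd V^2\F_2 V\,dx+\ioRd V|\F V|^2\,dx)$, $J_4$ contributes $-\frac{t^5}{5!}(2\ioRd V^3\F V\,dx+\mathcal{E}_\alpha(V^2))$, and $J_5$ contributes $-\frac{t^5}{5!}\ioRd V^5(x)\,dx$, which sum to the $t^5$ coefficient in the statement.

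For the remainder, the $L^\infty$-bounded Taylor remainder in each $J_n$ with $n\le 5$, combined with the simplex integral, is $O(t^6)$; the factor $V\in L^1(\Rd)$ provides integrability. The Dyson tail is $O(t^6)$ via the standard bound $|J_n(t)|\le\|V\|_1\|V\|_\infty^{n-1}t^n/n!$. The main obstacle is the combinatorial bookkeeping in the middle two paragraphs: miscounting a Dirichlet weight, or missing a Taylor monomial of a given type in a given $J_n$, would spoil a coefficient. Systematic use of the permutation symmetry of the simplex is what keeps the enumeration tractable and produces precisely the six stated groupings.
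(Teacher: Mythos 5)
Your proposal follows essentially the same route as the paper: expand $Q_V^{(\alpha)}(t)$ via the perturbation series for $e^{-tH_V}$ (the paper uses Feynman--Kac with the stable bridge, you use Duhamel, but after integrating out $x,y$ these produce the identical simplex integrals of Lemma \ref{Qbridge}), Taylor-expand the intermediate heat semigroups, reduce the time integrals to Dirichlet beta integrals (the paper's Lemma \ref{simpleformint}), and identify the resulting functionals of $V$. The one substantive difference is that the paper carries out the Taylor expansion and identification \emph{on the Fourier side} (Proposition on \eqref{inversionterm}, Corollary \ref{Term}), whereas you work entirely in physical space. Your time-reversal/permutation observation --- that monomials related by the reflection $\tau\mapsto(\tau_{n-1},\dots,\tau_1)$ carry equal Dirichlet weights and, by self-adjointness, equal functionals --- is a nice shortcut replacing the paper's explicit Fourier-transform verifications that, e.g., the two integrals comprising $C_{1,3}$ coincide.

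There is, however, a technical slip you should fix: you assert that $\mathcal{S}(\R^d)$ is ``stable under iterates of $H_\alpha$.'' For $0<\alpha<2$ this is false, since $\wh{\F f}(\xi)=|\xi|^\alpha\wh{f}(\xi)$ is not smooth at the origin and hence $\F f$ fails rapid decay (generically $|\F f(x)|\sim |x|^{-d-\alpha}$). Consequently your claim that each $\Phi_n(\tau)$ ``admits a multivariate Taylor expansion in the $\tau_i$'s with an $L^\infty$-bounded remainder'' needs a different justification: one must check that the iterated products $V\cdot H_\alpha^{k_1}(V\cdot H_\alpha^{k_2}(\cdots))$ stay bounded and pair integrably with the outer $V$, which does hold but is not a consequence of Schwartz stability. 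The paper's Fourier-side computation sidesteps exactly this issue, since $\wh{V}\in\mathcal{S}(\R^d)$ and the exponential $e^{-t\sum(\lambda_r-\lambda_{r+1})|\gamma_r|^\alpha}$ is a scalar, so the Taylor remainder is controlled simply by $\|\wh{V}\|_\infty\,\|\wh{\F_{M+1}V}\|_1\,\|\wh{V}\|_1^{k-2}<\infty$. That is the cleaner way to make your remainder estimate rigorous, and is precisely why the paper moves to the Fourier domain before Taylor expanding.
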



The expansion above  enables us to comment on the  similarities and differences 
between the heat trace and the heat content.  We start with the case $\alpha=2$. It is proved in \cite{BanSab} that
\begin{align*}    
\mathcal{T}_V^{(2)}(t)&=-t\ioRd V(\theta)d\theta + \frac{t^{2}}{2!}\ioRd V^2(\theta)d\theta -\frac{ t^{3}}{3!}\left(\ioRd V^3(\theta)d\theta + \frac{1}{2}\mathcal{E}_2(V)\right)\\ \nonumber
&+\frac{t^4}{4!}\left(\ioRd V^4(\theta)d\theta +2\ioRd  V(\theta)\abs{\nabla V(\theta)}^2d\theta+\frac{1}{5}\ioRd \abs{\left(-\Delta\right)V(\theta)}^2d\theta\right) \\ \nonumber
&-\frac{t^5}{5!}\int_{\Rd}\Biggr (V^5(\theta)d\theta
+\frac{3}{42} \abs{\nabla (-\Delta) V(\theta)}^2 +
5 V^2(\theta)\abs{\nabla V(\theta)}^2 \\ 
&+\frac{15}{27} V(\theta)\abs{(-\Delta) V(\theta)}^2 +
\frac{4}{9}V(\theta)\left(\sum\limits_{i,j=1}^{d}\partial_{x_i}
\partial_{x_j}V(\theta)\right)^2
\Biggl)d\theta +
\mathcal{O}(t^ 6), 
\end{align*}
as $\tgo$.

From 
$$
\ioRd V^2(\theta)
(-\Delta V)(\theta)d\theta=2\int_{\Rd}V(\theta)|\nabla V(\theta)|^2d\theta
$$ and 
$$
\int_{\Rd}\abs{\nabla (-\Delta) V(\theta)}^2d\theta=
\int_{\Rd}(-\Delta V)(\theta)(-\Delta)_2 V(\theta)d\theta=
\mathcal{E}_2((-\Delta)V)
$$
and other similar identities,  we note  that by  Theorem \ref{5termexp} the same integrands are involved in the expansion of both the heat trace and the heat content and both expansions  behave similarly  as $\tgo$.
On the other hand, in the case $0<\alpha<2$, the  heat trace and the heat content have completely different behavior for small $t$. In fact, the asymptotic expansion provided in \cite{Acu} for  $\mathcal{T}_V^{(\alpha)}(t)$ is dimensional dependent,  unlike the situation of  $\alpha=2$. That is, the powers of $t$ in the expansion depend on the location of $\alpha$ relative to the dimension $d$.  One of the  strongest result obtained in \cite{Acu} is that 
\begin{align}\label{SR}
\mathcal{T}_V^{(\alpha)}(t)=&-t\ioRd V(\theta)d\theta + \frac{t^{2}}{2!}\ioRd V^2(\theta)d\theta - \frac{t^3}{3!}\ioRd V^3(\theta)d\theta \\ \nonumber
&-{\mathcal L}_{d,\alpha}\,t^{2+\frac{2}{\alpha}}\mathcal{E}_{2}(V) 
 + {\mathcal O}(t^4),
\end{align} 
for all $d\geq 1$ and $\frac{3}{2}<\alpha<2$ as $\tgo$, where ${\mathcal L}_{d,\alpha}>0$ is a constant depending on the $\alpha/2$-subordinator related to $X$. There, it is also shown that the same integrals involved  in the expansion for $\mathcal{T}_V^{(2)}(t)$ appear in different positions in the corresponding expansion for $\mathcal{T}_V^{(\alpha)}(t)$ according to the given $\alpha$ under consideration. With these observations and Theorem \ref{Generalexpansion} below, we conclude that the expansion of $Q_{V}^{(\alpha)}(t)$  gives information on the action of the operator $\F$  on the potential $V$ and produces functions of $t$ of the form $c t^n$, $n \in\mathbb{N}$, with explicit real numbers $c$. On the other hand, according to the results
in \cite{Acu}, $\mathcal{T}_V^{(\alpha)}(t)$ gives information on the action of $\Delta$ on $V$ and produces powers of $t$ of the form $c_{\alpha,d}t^{n+\frac{j}{\alpha}}$, 
$n,j\in \mathbb{N}$, where $c_{d,\alpha}$, although explicitly given, are as of now quite difficult compute  for general $d$ and $\alpha$. For more on this, we refer the reader to \cite{Acu}.

 The paper is organized as follows. In \S \ref{sec:heatscho}, we show that $Q_V^{(\alpha)}(t)$ is a well defined function for every bounded and integral potential $V$ and prove  a series of lemmas needed for  the proof of Theorem \ref{1term} and Theorem \ref{Holdercont}. In \S \ref{sec:smoothpot}, we prove the existence of a general expansion for $Q_V^{(\alpha)}(t)$ (Theorem \ref{Generalexpansion}) under the assumption that the potential is a rapidly decreasing smooth function.  This is done using  Fourier Transform techniques. Lastly, in \S \ref{sec:coeff}, we compute the first five terms in the expansion obtained in \S \ref{sec:smoothpot} which proves Theorem \ref{5termexp}. 

\section{proof of theorems \ref{1term} and \ref{Holdercont}.}\label{sec:heatscho}
 We start this section by proving  that $Q_V^{(\alpha)}(t)$
 given by  \eqref{heatcontent}  is a well--defined function for all $t \geq 0$  as long as $V$ is  bounded and integrable.
We begin by observing that the elementary inequality $\abs{e^z-1}\leq \abs{z}e^{\abs{z}}$ gives 
\begin{equation*}
\abs{Q_V^{(\alpha)}(t)} \leq e^{t||V||_{\infty}}\iogRd{2}\palp(x,y)\mathbb{E}_{x,y}^t\left[\int_{0}^{t}|V(X_s)|ds\right]dxdy.
\end{equation*}
Next, by Fubini's theorem and the properties of stable bridge (see \cite{BanYil}, \cite{Ber} and \eqref{stablebridge} below) the integral term in the right hand side of the above inequality equals 
\begin{align}\label{basic.cal}
\iogRd{2}\palp(x,y)\left(\int_{0}^{t}\ioRd\abs{V(z)}\frac{p_{t-s}^{(\alpha)}(x,z)p_{s}^{(\alpha)}(z,y)}{\palp(x,y)}dzds\right)dxdy &=  \\ \nonumber
\int_{0}^{t}\ioRd\abs{V(z)}
\left(\ioRd p_{t-s}^{(\alpha)}(x,z)dx\ioRd p_{s}^{(\alpha)}(z,y)dy\right) dzds &=
t||V||_1,
\end{align}
where we have used the well known facts that for all $x, z\in\Rd$ and $t>0$,
$\palp(x,z)=\palp(z,x)$ and $\ioRd \palp(x,z)dx=1.$ Thus, we conclude that the heat 
content satisfies $$\abs{Q_V^{(\alpha)}(t)}\leq t||V||_1e^{t||V||_{\infty}}.$$ Therefore, $Q_V^{(\alpha)}(t)$ is well-defined for all $t\geq 0$ and bounded on any interval $(0,T]$, $T>0$, provided $V\in L^{\infty}(\Rd)\cap L^{1}(\Rd).$   It is also worth noting here that the  previous argument together with 
Taylor's expansion of the exponential function (see \eqref{Tay.exp} below) show that
\begin{equation}\label{asympofQ}
Q_V^{(\alpha)}(t)=\sum\limits_{k=0}^{\infty}\frac{(-1)^k}{k!}
\iogRd{2}\palp(x,y)\mathbb{E}^{t}_{x,y}\left[\left( \int_{0}^{t}V(X_s)ds\right)^k\right] dxdy,
\end{equation}
where the sum is absolutely convergent for all $t > 0$.

It is advantageous at this point to give a different expression for the  equation \eqref{asympofQ} in terms of the stable bridge in order to obtain further formulas for the coefficients and  estimates for the remainders in the forthcoming sections.
Before proceeding, we  introduce some notation to conveniently express our formulas below. For $k\in \mathbb{N}$, we set
\begin{align}\label{heavynotation}
I_k &=\set{\lambda^{(k)}=(\lambda_{k},\lambda_{k-1},...\lambda_1)\in [0,1]^{k}: 0<\lambda_{k}<\lambda_{k-1}< ... < \lambda_{1}<1},  \\ \nonumber \\ \nonumber
d\lambda^{(k)}&=d\lambda_k d\lambda_{k-1}...d\lambda_1, \,\,\,\,\, 
z^{(k)}=(z_1,...,z_k)\in \R^{kd},\,\,\,\, 
dz^{(k)} =dz_k...dz_1,\\ \nonumber \\ \nonumber
V_{k}\left(z^{(k)}\right)&=V_k(z_1,...,z_k)=\myprod{i}{1}{k}V(z_i), \,\,\,\,\,\,\, 
p\left(t,z^{(k)}\right)=\myprod{j}{1}{k-1}p_{t(\lambda_{j}-\lambda_{j+1})}^{(\alpha)}(z_{j},z_{j+1}).
\end{align}

It is well known \cite{Tib}  that
\begin{equation}\label{symmetric}
\left(\int_{0}^{1}\tilde{V}(s)ds\right)^k= k!\int_{I_k}\myprod{i}{1}{k}\tilde{V}(\lambda_i)d\lambda^{(k)},
\end{equation}
for any $\tilde{V}:[0,1]\rightarrow \R$ integrable.

\begin{lem}\label{Qbridge} 
For any $t>0$ and $J\geq 2$,
\begin{eqnarray*}
Q_V^{(\alpha)}(t)=-t\ioRd V(\theta)d\theta +
\mysum{k}{2}{J}(-t)^k
\int_{I_k}\iogRd{k}V_{k}\left(z^{(k)}\right)p\left(t,z^{(k)}
\right)dz^{(k)}d\lambda^{(k)}
+ R_{J+1}(t),
\end{eqnarray*}
where 
\begin{equation}\label{Rem2}
\abs{R_{J+1}(t)}\leq t^{J+1}||V||_1||V||_{\infty}^{J}e^{t||V||_{\infty}}.
\end{equation}
\end{lem}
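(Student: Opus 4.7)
The plan is to start from the absolutely convergent series \eqref{asympofQ}, turn each summand into an iterated integral on $I_{k}\times\R^{kd}$ via a stable-bridge calculation, and bound the tail $\sum_{k\geq J+1}$ as the remainder $R_{J+1}(t)$.

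First, I would apply \eqref{symmetric} with $\tilde V(s)=V(X_{ts})$ to obtain, for every $k\geq 1$, the symmetrization
\[
\left(\int_{0}^{t}V(X_{s})\,ds\right)^{k}=t^{k}\,k!\int_{I_{k}}\prod_{i=1}^{k}V(X_{t\lambda_{i}})\,d\lambda^{(k)},
\]
so that the factor $k!$ cancels the $1/k!$ in \eqref{asympofQ}. Using the finite-dimensional distributions of the $\alpha$-stable bridge referenced in \eqref{stablebridge},
\[
p_{t}^{(\alpha)}(x,y)\,\mathbb{E}^{t}_{x,y}\!\Bigl[\prod_{i=1}^{k}V(X_{t\lambda_{i}})\Bigr]=\int_{\R^{kd}}V_{k}(z^{(k)})\,p^{(\alpha)}_{t\lambda_{k}}(x,z_{k})\,p(t,z^{(k)})\,p^{(\alpha)}_{t(1-\lambda_{1})}(z_{1},y)\,dz^{(k)},
\]
the normalization $p_{t}^{(\alpha)}(x,y)$ cancels, and integrating in $x,y\in\Rd$ via Fubini (exactly as in \eqref{basic.cal}) together with $\int_{\Rd}p^{(\alpha)}_{s}(w,z)\,dw=1$ for every $s>0$ reduces the two outermost integrals each to $1$. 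This expresses the $k$-th term of $Q_{V}^{(\alpha)}(t)$ in the form $(-t)^{k}\int_{I_{k}}\int_{\R^{kd}}V_{k}(z^{(k)})\,p(t,z^{(k)})\,dz^{(k)}\,d\lambda^{(k)}$. Since $p(t,z^{(1)})$ is the empty product $1$ and $\int_{I_{1}}d\lambda_{1}=1$, the $k=1$ contribution is exactly $-t\int_{\Rd}V(\theta)\,d\theta$, and the $k=2,\ldots,J$ contributions yield the finite sum in the statement.

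For the remainder I would carry the same manipulation through with $V$ replaced by $|V|$ and then invoke the pointwise estimate $|V_{k}(z^{(k)})|\leq\|V\|_{\infty}^{k-1}|V(z_{1})|$. Integrating out $z_{k},z_{k-1},\ldots,z_{2}$ in order (each producing a factor of $1$) and finally $\int_{\Rd}|V(z_{1})|\,dz_{1}=\|V\|_{1}$ yields $\int_{\R^{kd}}|V_{k}(z^{(k)})|\,p(t,z^{(k)})\,dz^{(k)}\leq\|V\|_{\infty}^{k-1}\|V\|_{1}$, so that combining with $\int_{I_{k}}d\lambda^{(k)}=1/k!$ and the elementary estimate $\sum_{k\geq J+1}x^{k}/k!\leq x^{J+1}e^{x}$ delivers \eqref{Rem2}. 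The only step requiring genuine care is keeping the time ordering $0<t\lambda_{k}<\cdots<t\lambda_{1}<t$ consistent with the position labels $z_{k},\ldots,z_{1}$ in the bridge density; this is notational bookkeeping rather than a real obstacle.
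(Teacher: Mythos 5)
Your proposal is correct and follows essentially the same route as the paper: start from the absolutely convergent series \eqref{asympofQ}, symmetrize via \eqref{symmetric}, rewrite each term with the stable-bridge finite-dimensional distributions and integrate out the two endpoint variables, and bound the tail sum for $R_{J+1}(t)$. The only cosmetic difference is in the remainder bound, where the paper applies the elementary estimate $\bigl(\int_0^t|V(X_s)|\,ds\bigr)^k\leq(t\|V\|_\infty)^{k-1}\int_0^t|V(X_s)|\,ds$ directly to \eqref{basic.cal} rather than pushing the symmetrization and bridge computation through a second time; both yield the same bound $t^{J+1}\|V\|_1\|V\|_\infty^J e^{t\|V\|_\infty}$.
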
 

\begin{proof}
Set 
\begin{align*}
R_{J+1}(t)=\sum\limits_{k=J+1}^{\infty}\frac{(-1)^k}{k!}
\iogRd{2}\palp(x,y)\mathbb{E}^{t}_{x,y}\left[\left( \int_{0}^{t}V(X_s)ds\right)^k\right] dxdy.
\end{align*} 
It  is clear  by \eqref{basic.cal} that
\begin{align*}
\abs{ R_{J+1}(t)}&\leq\sum\limits_{k=J+1}^{\infty}\frac{(t||V||_{\infty})^{k-1}}{k!}
\iogRd{2}\palp(x,y)\mathbb{E}^{t}_{x,y}\left[ \int_{0}^{t}\abs{V(X_s)}ds\right] dxdy \\
&\leq t||V||_1 \sum\limits_{k=J+1}^{\infty}\frac{(t||V||_{\infty})^{k-1}}{k!}
\leq  t^{J+1}||V||_1||V||_{\infty}^{J}e^{t||V||_{\infty}}.
\end{align*}

On the other hand, by making a suitable change  of variables and appealing to \eqref{symmetric} we observe that
\begin{align*}
\mathbb{E}^{t}_{x,y}\left[\left( \int_{0}^{t}V(X_s)ds\right)^k\right]&=
t^{k}\mathbb{E}^{t}_{x,y}\left[\left( \int_{0}^{1}V(X_{ts})ds\right)^k\right] \\ \nonumber
&=k!t^{k}\mathbb{E}^{t}_{x,y}\left[\int_{I_{k}}V(X_{t\lambda_1})...V(X_{t\lambda_{k}})
d\lambda^{(k)}\right].
\end{align*}

We recall that the finite dimensional distributions of the stable bridge (see \cite{BanYil}, \cite{Ber} and references therein for details) are given by
\begin{align}\label{stablebridge}
&\Prob^t_{x,y}\left(X_{t\lambda_1} \in dz_1 , X_{t\lambda_2} \in dz_2 , . . . , X_{t\lambda_k} \in dz_k\right)\\\nonumber &=\frac{1}{\palp(x, y)}\myprod{j}{0}{k}p_{t(\lambda_{j}-\lambda_{j+1})}^{(\alpha)}(z_{j},z_{j+1})dz^{(k)},
\end{align}
where $z_0 =x$, $z_{k+1} = y$, $\lambda_0 = 1$, and $\lambda_{k+1} = 0$.
Hence, using the fact  that
$$\int_{\Rd}p_{t(1-\lambda_1)}^{(\alpha)}(x,z_1)dx=
\int_{\Rd}p_{t\lambda_k}^{(\alpha)}(z_k,y)dy=1,$$ the notation given in \eqref{heavynotation} and the finite 
distribution for the stable bridge given above, we conclude by Fubini's theorem  that
\begin{align}\label{GeneralTerm}
&\iogRd{2}\palp(x,y)\mathbb{E}^{t}_{x,y}\left[\left( \int_{0}^{t}V(X_s)ds\right)^k\right]dxdy\\\nonumber
 &=
k!t^{k}\int_{I_k}\iogRd{k}\myprod{i}{1}{k}V(z_i)\myprod{j}{1}{k-1}
p_{t(\lambda_{j}-\lambda_{j+1})}^{(\alpha)}(z_{j},z_{j+1})
dz^{(k)}d\lambda^{(k)} \\ \nonumber &=
k!t^{k}\int_{I_k}\iogRd{k}V_{k}\left(z^{(k)}\right)
p\left(t,z^{(k)}\right)dz^{(k)}d\lambda^{(k)}.
\end{align}
Therefore, the lemma follows from equation \eqref{asympofQ}.
\end{proof}

{\bf Proof of Theorem \ref{1term}}:
Setting $a=\int_{0}^{t} V(X_s)ds$ and $b=t||V||_{\infty}$, we observe that $-b\leq a<0$.
We use the elementary   inequality
$$ -a\leq e^{-a}-1\leq -a\left(1+\frac{1}{2}be^b\right),$$ to obtain
\begin{equation}\label{Vneg.asym} 
-\int_{0}^t V(X_s)ds \leq e^{-\int_{0}^{t}V(X_s)ds }-1 \leq -\int_{0}^t V(X_s)ds
\left(1+\frac{1}{2}t||V||_{\infty}e^{t||V||_{\infty}}\right).
\end{equation}
By taking expectations \,$\mathbb{E}^{t}_{x,y}$\, at both sides of \eqref{Vneg.asym}, multiplying through by $\palp(x,y)$, integrating on $\R^{2d}$ with respect to $x$ and $y$ and appealing to \eqref{basic.cal} where $|V|$ is replaced by  $-V\geq0$, we arrive at
$$-t\ioRd V(x)dx\leq Q_V^{(\alpha)}(t) \leq -t\ioRd V(x)dx \left( 1+\frac{1}{2}t||V||_{\infty}e^{t||V||_{\infty}}\right).$$ Thus, (i) follows. 

 By using \eqref{basic.cal} and \eqref{asympofQ}, we have 
\begin{align*}\label{asympQ&rem}
\abs{Q_V^{(\alpha)}(t)+t\ioRd V(x)dx}&\leq\mysum{k}{2}{\infty}\frac{1}{k!}
\iogRd{2}\palp(x,y)\mathbb{E}^{t}_{x,y}\left[\left( \int_{0}^{t}\abs{V(X_s)}ds\right)^k\right] dxdy \\ \nonumber
&\leq t||V||_1\mysum{k}{2}{\infty}\frac{1}{k!}t^{k-1}||V||^{k-1}_{\infty}
\leq t^2||V||_1||V||_{\infty}e^{t||V||_{\infty}},
\end{align*}
which in turn proves (ii). 
\bigskip

{\bf Proof of Theorem \ref{Holdercont}:}
We start by recalling two basic facts about the  $\alpha$-stable process $X$, $0<\alpha\leq 2$. First,
\begin{equation}\label{expofX1andscaling}
 \mathbb{E}^{0}\left[|X_1|^{\gamma}\right]<\infty,
\end{equation}
whenever $\gamma<\alpha<2$. As for $\alpha=2$, the 
above fact is also true, since in this case $\gamma=1$. Secondly, 
\begin{equation}\label{eqinlaw}
X_t=t^{1/\alpha}X_1,
\end{equation}
in law as we can see from the characteristic function \eqref{CharF.Proc}.

The H\"older continuity assumption  on $V$,  as we shall see in the next lemma,  enables  us to estimate  the second term in \eqref{asympofQ}.

\begin{lem} \label{2ndtermestimate}
Under the same assumptions on the potential V given in Theorem \ref{Holdercont}, we have
for all $t>0$ that
 $$\iogRd{2}\palp(x,y)\mathbb{E}^{t}_{x,y}\left[\left(\int_{0}^{t}V(X_s)ds\right)\right]dxdy= 2t^2\ioRd\abs{V(x)}^2dx + R(t),$$ where the remainder
 $R(t)$ satisfies 
 $$\abs{R(t)}\leq C_0(\alpha,\gamma)||V||_1t^{\frac{\gamma}{\alpha}+2}.$$
\end{lem}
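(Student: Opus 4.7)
The plan is to identify the integrated expectation as the $k=2$ term already computed in the proof of Lemma \ref{Qbridge}, and then to establish a quantitative approximate-identity bound for the stable heat kernel paired in $L^1$ against $V$. Repeating the computation leading to \eqref{GeneralTerm} (time-rescale $s = t\lambda$, symmetrize via \eqref{symmetric}, apply the bridge finite-dimensional law \eqref{stablebridge}, and integrate out the endpoints against the marginals) gives
\[
\iogRd{2} \palp(x,y)\, \mathbb{E}^{t}_{x,y}\!\left[\left(\int_{0}^{t} V(X_s)\, ds\right)^{2}\right] dx\, dy = 2t^2 \int_{I_2} \iogRd{2} V(z_1) V(z_2)\, p_{t(\lambda_1-\lambda_2)}^{(\alpha)}(z_1, z_2)\, dz_1\, dz_2\, d\lambda^{(2)}.
\]
With $s = t(\lambda_1 - \lambda_2)$, the task then reduces to showing that the inner integral approaches $\ioRd V^2(z)\,dz$ at the uniform rate $\mathcal{O}(s^{\gamma/\alpha})$.

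To extract that rate I would use translation invariance, $p_s^{(\alpha)}(z_1,z_2) = p_s^{(\alpha)}(z_1 - z_2)$, together with the scaling relation \eqref{eqinlaw}, to express the convolution probabilistically,
\[
\iogRd{2} V(z_1) V(z_2)\, p_s^{(\alpha)}(z_1, z_2)\, dz_1\, dz_2 = \ioRd V(z)\, \mathbb{E}^0\!\left[V(z + s^{1/\alpha} X_1)\right] dz,
\]
and then split $V(z + s^{1/\alpha} X_1) = V(z) + \bigl[V(z + s^{1/\alpha} X_1) - V(z)\bigr]$. The first summand contributes exactly $\ioRd V^2(z)\,dz$. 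For the second, the H\"older hypothesis yields the pointwise bound $\abs{V(z + s^{1/\alpha} X_1) - V(z)} \le M\, s^{\gamma/\alpha}\, \abs{X_1}^{\gamma}$; taking expectation, invoking the moment bound \eqref{expofX1andscaling} (valid precisely because $\gamma < \alpha$), and pairing against $|V(z)|$ in $z$ controls this remainder by $M\, \mathbb{E}^0[\abs{X_1}^{\gamma}]\, ||V||_1\, s^{\gamma/\alpha}$.

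Assembling the pieces, the $\lambda$-integral contributes the finite beta-type factor $\int_{I_2} (\lambda_1 - \lambda_2)^{\gamma/\alpha}\, d\lambda^{(2)} = \bigl[(\gamma/\alpha + 1)(\gamma/\alpha + 2)\bigr]^{-1}$, which is finite precisely because $0 < \gamma/\alpha < 1$. Multiplying through by $2t^2$ and absorbing $M$, the moment constant, and the beta factor into $C_0(\alpha,\gamma)$ yields the claimed remainder bound. The argument is structurally routine; the only real subtlety is that the estimate must hold uniformly for \emph{all} $t > 0$, and it is the $t$-independent moment bound \eqref{expofX1andscaling}---rather than any dominated convergence argument applied at small $t$---that delivers this uniformity. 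Without the H\"older hypothesis one could still obtain convergence of the inner integral to $\ioRd V^2(z)\,dz$ via a standard approximate-identity argument, but with no quantitative rate, which is precisely why the hypothesis $0<\gamma<\min\{1,\alpha\}$ enters.
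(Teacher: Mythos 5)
Your proof is correct and follows essentially the same route as the paper's: both reduce to the $k=2$ bridge term via \eqref{GeneralTerm}, isolate $\int V^2$ by splitting off the difference $V(z_1)-V(z_2)$, apply the H\"older hypothesis together with the scaling identity \eqref{eqinlaw} and the finite $\gamma$-moment \eqref{expofX1andscaling} to get the $s^{\gamma/\alpha}$ rate, and finish by evaluating the beta-type $\lambda$-integral. The only cosmetic difference is that you express the $z_1$-integral probabilistically as $\mathbb{E}^0[V(z+s^{1/\alpha}X_1)]$ before splitting, whereas the paper splits first and then recognizes the integral of $|z_1-z_2|^\gamma$ against $p_s^{(\alpha)}$ as $\mathbb{E}^0[|X_s|^\gamma]$; also note the statement as printed is missing the exponent $2$ on the time integral, which both your argument and the paper's silently include.
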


\begin{proof}
We start by applying \eqref{GeneralTerm} with $k=2$, so that

\begin{align*}
&\iogRd{2}\palp(x,y)\mathbb{E}^{t}_{x,y}\left[\left(\int_{0}^{t}
V(X_s)ds\right)^2\right]dxdy=\\
&2t^2\int_{0}^{1}\int_{0}^{\lambda_1}\iogRd{2}V(z_1)V(z_2)
 p_{t(\lambda_1-\lambda_2)}^{(\alpha)}(z_2,z_1)dz_2dz_1d\lambda_2d\lambda_1.
\end{align*}
Now,
\begin{align}\label{bridge2}
&\iogRd{2}V(z_1)V(z_2)p_{t(\lambda_1-\lambda_2)}^{(\alpha)}(z_2,z_1)dz_1dz_2=
\\\nonumber &\iogRd{2}\left(V(z_1)-V(z_2)\right)V(z_2)p_{t(\lambda_1-\lambda_2)}^{(\alpha)}(z_2,z_1)dz_1dz_2\\\nonumber
&+
\iogRd{2}V^2(z_2) p_{t(\lambda_1-\lambda_2)}^{(\alpha)}(z_2,z_1)dz_1dz_2.
\end{align}

The second term on the right hand side of  equality  \eqref{bridge2} equals  $\ioRd V^2(x)dx$, whereas for the first term, by using  \eqref{expofX1andscaling},
\eqref{eqinlaw} and the H\"{o}lder continuity assumption on $V$,  we have 
\begin{align}\label{1remestimate}
&\abs{\iogRd{2}\left(V(z_1)-V(z_2)\right)V(z_2)p_{t(\lambda_1-\lambda_2)}^{(\alpha)}(z_2,z_1)dz_1dz_2}\\ \nonumber &\leq 
M\iogRd{2}|z_1-z_2|^{\gamma}|V(z_2)|p_{t(\lambda_1-\lambda_2)}^{(\alpha)}(z_2,z_1)dz_1dz_2\\ \nonumber 
&=M \ioRd \left( \ioRd |z_1-z_2|^{\gamma}p_{t(\lambda_1-\lambda_2)}^{(\alpha)}(z_2,z_1)dz_1\right) \abs{V(z_2)}dz_2 \\ \nonumber
&=M \,\,||V||_1\,\,\mathbb{E}^{0}[ \,\abs{X_{t(\lambda_1-\lambda_2)}}^{\gamma}\,]\\ \nonumber
&=M\,\, ||V||_1\,\, \left(t(\lambda_1-\lambda_2)\right)^{\frac{\gamma}{\alpha}}\,\,
\mathbb{E}^{0}[\,\abs{X_1}^{\gamma}\,].
\end{align}
Thus, by using the fact that 
$$\int_{0}^{1}\int_{0}^{\lambda_1}\left(\lambda_1-\lambda_2\right)
^{\gamma/\alpha} d\lambda_2d\lambda_1=
 \left( \frac{\gamma}{\alpha}+2\right)^{-1}\left(\frac{\gamma}{\alpha}+1 \right)^{-1},$$
we obtain that  the conclusion of the lemma follows from \eqref{1remestimate}, \eqref{bridge2} by setting 
$$R(t)=2t^2 \int_{0}^{1}\int_{0}^{\lambda_1}\iogRd{2}\left(V(z_1)-V(z_2)\right) V(z_2)p_{t(\lambda_1-\lambda_2)}^{(\alpha)}(z_2,z_1)dz_1dz_2d\lambda_1d\lambda_2.$$
\end{proof}
Therefore,  using that $t^3\leq t^{2+\frac{\gamma}{\alpha}}$
 for $t\in (0,1)$, we have that  Theorem \ref{Holdercont} is a consequence  of applying   Lemma
 \ref{2ndtermestimate}  to the following expression obtained in Lemma \ref {Qbridge} when $J=2$,
\begin{align*}
Q_V^{(\alpha)}(t)= -t\ioRd V(x)dx+  
\frac{1}{2}\iogRd{2}\palp(x,y)E^{t}_{x,y}\left[\left(\int_{0}^{t}V(X_s)ds\right)^2
 \right]dxdy + R_3(t),
 \end{align*}
 where we already know that
 $$ \abs{R_3(t)}\leq t^3||V||_1||V||_{\infty}^2
 e^{t||V||_{\infty}}.$$


\section{general expansion for  rapidly decreasing smooth potential}\label{sec:smoothpot}
\label{sec:GeneralCoef}
We have already seen in the previous section that by adding an extra regularity condition on the potential $V$, namely, H\"{o}lder continuity and using $X_t=t^{1/\alpha}X_1$ in law, we have been able to extract a second term in the expansion of $Q_V^{(\alpha)}(t)$. In this section, we
 will obtain more terms and find explicit expressions for these which as before will depend on the potential $V$. 
 
 Let $V\in \mathcal{S}(\Rd)$.
 We denote by  $\wh{V}$ the Fourier transform of $V$ with the normalization 
 \begin{equation}\label{Fourierdef}
\wh{V}(\xi)= \ioRd e^{-\dot{\iota}x\cdot \xi}V(x)dx.
 \end{equation}

We note that due to our definition of  $\wh{V}$, we have, by setting 
$\bar{d}\xi=(2\pi)^{-d}d\xi$, 
\begin{enumerate}
\item[(i)] (Fourier inversion formula) 
\begin{equation*} \label{InversionForm}
V(x)=\ioRd e^{\dot{\iota}x\cdot\xi}\wh{V}(\xi)\bar{d}\xi, 
\end{equation*} 
and 
\item[(ii)] (Plancherel identity) For $f,g\in \mathcal{S}(\Rd)$,
\begin{equation*}\label{Plancherel}
\ioRd e^{-\dot{\iota}x\cdot\xi }f(x)g(x)dx= \ioRd \wh{f}(\theta)\wh{g}(\xi-\theta)\bar{d}\theta.
\end{equation*}
\end{enumerate}
The fact that $V\in \mathcal{S}(\Rd)$ will allows us to apply the inversion formula to each summand in Lemma \ref{Qbridge} which in turn will provide  the terms obtained in Theorem \ref{5termexp}.  To do this, we need the following proposition.

\begin{prop}
 For any $k\geq2$,
\begin{align}\label{inversionterm}
&\iogRd{k}\myprod{i}{1}{k}V(z_i)\myprod{j}{1}{k-1}p_{t(\lambda_{j}-\lambda_{j+1})}^{(\alpha)}(z_{j},z_{j+1})dz_k...dz_1 =\\ \nonumber
&\iogRd{(k-1)}\wh{V}(-\mysum{i}{1}{k-1}\theta_i) \myprod{i}{1}{k-1}\wh{V}(\theta_i)
\exp\left(-t\mysum{r}{1}{k-1}(\lambda_{r}-\lambda_{r+1})
\abs{\mysum{m}{1}{r}\theta_m}^\alpha \right)\bar{d}\theta_{k-1}...\bar{d}\theta_{1}.
\end{align}
\end{prop}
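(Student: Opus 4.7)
The strategy is a direct Fourier-side calculation using the Fourier inversion formula for $V\in\mathcal{S}(\Rd)$ together with the characteristic-function identity \eqref{CharF.Proc}, which, by Fourier inversion, rewrites the stable kernel as
$$\palp(z_j - z_{j+1})=\ioRd e^{\dot{\iota}(z_j-z_{j+1})\cdot\eta_j}\,e^{-t(\lambda_j-\lambda_{j+1})|\eta_j|^{\alpha}}\,\bar{d}\eta_j.$$
First I would substitute $V(z_i)=\ioRd e^{\dot{\iota}z_i\cdot\xi_i}\wh{V}(\xi_i)\bar{d}\xi_i$ for each $1\le i\le k$ together with the above representation for each of the $k-1$ propagators into the left-hand side of \eqref{inversionterm}. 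Because $\wh{V}\in\mathcal{S}(\Rd)$ and $e^{-s|\eta|^{\alpha}}$ is bounded, Fubini applies and the $z_1,\dots,z_k$ integrations can be performed first.

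Carrying out the $z_i$-integrations and using the identity $\ioRd e^{\dot{\iota}z\cdot v}dz = (2\pi)^d\delta(v)$ produces exactly $k$ linear delta-function constraints: integration in $z_1$ enforces $\eta_1=-\xi_1$; integration in $z_j$ for $2\le j\le k-1$ enforces $\eta_j=\eta_{j-1}-\xi_j$; and integration in $z_k$ enforces $\xi_k=\eta_{k-1}$. Solving this triangular system inductively yields $\eta_j=-\mysum{m}{1}{j}\xi_m$ for all $1\le j\le k-1$, and the final constraint reads $\xi_k=-\mysum{m}{1}{k-1}\xi_m$. Integrating the $k-1$ $\eta$-deltas against their $\bar{d}\eta_j$ measures eliminates all the $\eta_j$ variables, while the remaining delta, integrated against $d\xi_k$, removes $\xi_k$ and (together with the $k$ factors $(2\pi)^d$ coming from the $z$-integrations versus the $k$ factors $(2\pi)^{-d}$ in the $\bar{d}\xi_i$ measures) leaves exactly the $k-1$ measures $\bar{d}\theta_i$ after renaming $\xi_i\mapsto\theta_i$.

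Finally I would substitute the resolved values into the remaining expression. The product $\myprod{i}{1}{k}\wh{V}(\xi_i)$ becomes $\wh{V}\bigl(-\mysum{m}{1}{k-1}\theta_m\bigr)\myprod{i}{1}{k-1}\wh{V}(\theta_i)$, and, using $|{-v}|^{\alpha}=|v|^{\alpha}$, the exponential factor turns into $\exp\bigl(-t\mysum{r}{1}{k-1}(\lambda_r-\lambda_{r+1})|\mysum{m}{1}{r}\theta_m|^{\alpha}\bigr)$, which is precisely the right-hand side of \eqref{inversionterm}. The main obstacle is not analytic---Fubini is routine under the Schwartz hypothesis---but rather the careful bookkeeping of the delta-function constraints, the $(2\pi)^d$ normalizations, and the sign conventions; I would handle this either by the above direct route or, equivalently, by induction on $k$ applying the convolution and Plancherel identities to peel off one $\wh{V}$ at a time.
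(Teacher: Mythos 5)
Your proposal is correct and takes essentially the same Fourier-side route as the paper: the paper applies Plancherel in $\R^{kd}$ and then computes $\wh{p}(t,-\theta^{(k)})$ via the change of variables $z_r=\xi_r-\xi_{r+1}$, which yields the same delta-function constraint $(2\pi)^d\delta\bigl(\mysum{i}{1}{k}\theta_i\bigr)$ that you obtain by integrating out the position variables against your $\eta_j$ representation. The bookkeeping you sketch (triangular system of momentum constraints, $(2\pi)^d$ normalizations, sign conventions, and the final renaming $\xi_i\mapsto\theta_i$) matches the paper's computation exactly.
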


\begin{proof}
Under the notation given in \eqref{heavynotation} we have 
\begin{align*}
V_{k}\left(z^{(k)}\right)&=V_k(z_1,...,z_k)=\myprod{i}{1}{k}V(z_i);\\
p\left(t,z^{(k)}\right)&=\myprod{r}{1}{k-1}p_{t(\lambda_{r}-\lambda_{r+1})}^{(\alpha)}(z_{r},z_{r+1}).
\end{align*}

By applying Fourier transform in
$\R^{kd}$ and Plancherel identity, we obtain
\begin{equation}\label{Planc.Gen.Term}
\iogRd{k}V_k\left(z^{(k)}\right)p\left(t,z^{(k)}\right)dz^{(k)}=
\iogRd{k}\wh{V_k}\left(\theta^{(k)}\right)\wh{p}
\left(t,-\theta^{(k)}\right)\bar{d}\theta^{(k)}.
\end{equation}

Next, it  follows  easily that if  $\theta^{(k)}=(\theta_1,...\theta_k)$, $\theta_i \in \Rd$,  then
$$\wh{V_k}(\theta^{(k)})=\myprod{i}{1}{k}\wh{V}(\theta_i).$$

On the other hand, we claim that
\begin{equation}\label{Fourierp}
\wh{p}(t,-\theta^{(k)})= (2\pi)^d\delta\left({\tiny\mysum{i}{1}{k}}\theta_i\right)\exp\left(-t\mysum{r}{1}{k-1}(\lambda_{r}-\lambda_{r+1})
{\tiny\abs{\mysum{m}{1}{r}\theta_m}^\alpha}\right).
\end{equation}
To see this, we observe by \eqref{Fourierdef} that
\begin{align*}\label{densityRkd}
\wh{p}(t,-\theta^{(k)})=\int_{\R^{kd}}\exp\left( \dot{\iota}\sum\limits_{j=1}^{k}\xi_j\cdot\theta_j\right) \prod\limits_{r=1}^{k-1}
p_{t(\lambda_{r}-\lambda_{r+1})}^{(\alpha)}(\xi_r-\xi_{r+1})d\xi^{(k)}.
\end{align*}
By considering the substitutions $z_r=\xi_r-\xi_{r+1}$, $r\in\set{1,...,k-1}$, we have
for any $j\in \set{1,...,k-1}$ that
$$\xi_j= \xi_k+\mysum{r}{j}{k-1}z_r.$$
Therefore, we obtain after interchanging the order of summation that
\begin{equation*}
\sum\limits_{j=1}^{k-1} \xi_j\cdot \theta_j= \sum\limits_{r=1}^{k-1}z_r \cdot\,\left(\sum\limits_{m=1}^{r}\theta_{m}\right) +
 \xi_{k}\cdot\,\, \sum\limits_{i=1}^{k-1}\theta_{i}.
\end{equation*}
Thus, \eqref{Fourierp} follows by using that
\begin{align*}
\wh{p_t^{(\alpha)}}(\xi)&=e^{-t|\xi|^{\alpha}}, \\
\int_{\Rd}\exp\left(\dot{\iota}\xi_{k}\cdot\,\sum\limits_{i=1}^{k}\theta_{i}\right)d\xi_k&= (2\pi)^d\delta\left(\sum\limits_{i=1}^{k}\theta_{i}\right),
\end{align*} 
and 
\begin{eqnarray*}
\wh{p}(t,-\theta^{(k)})&=&\int_{\R^{kd}}\exp\left( \dot{\iota}\sum\limits_{r=1}^{k-1}z_r\cdot \left(\sum\limits_{m=1}^{r}\theta_{m}\right)+\dot{\iota}\xi_{k}\cdot\,\, \sum\limits_{i=1}^{k}\theta_{i}\right)\\
&&\times \prod\limits_{r=1}^{k-1}
p_{t(\lambda_{r}-\lambda_{r+1})}^{(\alpha)}(z_r)dz^{(k-1)}d\xi_k.
\end{eqnarray*}
Consequently, the conclusion of the proposition follows from   \eqref{Planc.Gen.Term} and \eqref{Fourierp}.
\end{proof}
We next recall the  Taylor expansion for the exponential function 
\begin{equation}\label{Tay.exp}
e^{-x}= \mysum{n}{0}{M}\frac{(-1)^n}{n!}x^{n} + \frac{(-1)^{M+1}}{(M+1)!}x^{M+1}e^{-x \beta_{M+1}(x)},
\end{equation}
valid for every $x\geq 0$ and integer $M\geq 0$, where we call $\beta_{M+1}(x)\in (0,1)$ the remainder of order $M+1$.

 We also recall  that for $k\geq2$ integer, the Binomial theorem asserts  that
$$(x_1 + x_2 + \cdots + x_{k-1})^n = \sum_{
\substack{(\ell_1,...,\ell_{k-1})\,\,\,\in \,\,\,\mathbb{N}^{k-1},\\ \ell_1+\ell_2+\ldots+\ell_{k-1}=n}} {n \choose \ell_1, \ell_2, \ldots, \ell_{k-1}} x_1^{\ell_1} x_2^{\ell_2} \cdots x_{k-1}^{\ell_{k-1}}.$$
Next, bearing in mind the notation given in \eqref{heavynotation}, we set $\gamma_r =\mysum{m}{1}{r}\theta_m$, 

\begin{equation*}
 \ell^{(k-1)}=(\ell_1,\,...\,,\ell_{k-1})\in \mathbb{N}^{k-1},
 \end{equation*}
 
 \begin{equation*}
A(n, \ell^{(k-1)})={n \choose \ell_1, \ell_2, \ldots, \ell_{k-1}}\int_{I_k}\myprod{i}{1}{k-1}(\lambda_i-\lambda_{i+1})^{\ell_i}d\lambda^{(k)},
\end{equation*}
and 
\begin{align}\label{Texp}
T_k(t)=\int_{I_k}\iogRd{(k-1)}&\myprod{i}{1}{k-1}\wh{V}(\theta_i)\wh{V}(-\mysum{i}{1}{k-1}\theta_i)\\ \nonumber
&\times\exp\left(-t\mysum{r}{1}{k-1}(\lambda_{r}-\lambda_{r+1})
\abs{\mysum{m}{1}{r}\theta_m}^{\alpha}\right)
\bar{d}\theta^{(k-1)}d\lambda^{(k)}.
\end{align}

Therefore, under this notation, we obtain the following expansion for the term $T_k(t)$. 
\begin{cor}\label{Term}
Let $M\geq0$ and $k\geq2$ be integers. Then
\begin{align*}
T_k(t)=\mysum{n}{0}{M}\frac{(-t)^n}{n!}C_{n,k}(V) + R_{M+1}^{(k)}(t),
\end{align*}
where
\begin{eqnarray*}
R_{M+1}^{(k)}(t)&=&\frac{(-t)^{M+1}}{(M+1)!}\int_{I_k}\iogRd{(k-1)}\myprod{i}{1}{k-1}\wh{V}(\theta_i)\wh{V}(-\mysum{i}{1}{k-1}\theta_i)\\
&&\times \left(\mysum{r}{1}{k-1}(\lambda_r-\lambda_{r+1})
|\gamma_r|^{\alpha}\right)^{M+1}e^{-\Upsilon}\bar{d}\theta^{(k-1)}
d\lambda^{(k)},
\end{eqnarray*} 
for some nonnegative function 
$\Upsilon=\Upsilon(t,\lambda^{(k)},\theta^{(k-1)},M+1)$.  The remainder  satisfies
\begin{equation}\label{Rem1}
 R_{M+1}^{(k)}(t)=\mathcal{O}(t^{M+1}),
\end{equation}
as $\tgo$. Moreover, the coefficients are given by 
\begin{eqnarray*}\label{genterm}
C_{n,k}(V)&=&\sum_{
\substack{(\ell_1,...,\ell_{k-1})\,\,\in \,\, \mathbb{N}^{k-1},\\ \ell_1+\ell_2+\ldots+\ell_{k-1}=n}} A(n,\ell^{(k-1)})
\iogRd{(k-1)}\wh{V}(-\mysum{i}{1}{k-1}\theta_i)\myprod{i}{1}{k-1}\wh{V}(\theta_i)\\
&&\times
\abs{\mysum{m}{1}{i}\theta_m}^{\alpha\ell_i}\bar{d}\theta^{(k-1)}.
\end{eqnarray*}
\end{cor}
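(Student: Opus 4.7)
The plan is to apply the Taylor expansion \eqref{Tay.exp} together with the multinomial theorem to the exponential appearing in the definition \eqref{Texp} of $T_k(t)$, and then to interchange the finite sums with the integrations.

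First, set $x_r = (\lambda_r-\lambda_{r+1})\abs{\gamma_r}^{\alpha}\geq 0$ so that the exponent inside \eqref{Texp} reads $-t\mysum{r}{1}{k-1}x_r$. Applying \eqref{Tay.exp} to this nonnegative quantity yields
$$\exp\!\left(-t\mysum{r}{1}{k-1}x_r\right)=\mysum{n}{0}{M}\frac{(-t)^n}{n!}\left(\mysum{r}{1}{k-1}x_r\right)^{n}+\frac{(-t)^{M+1}}{(M+1)!}\left(\mysum{r}{1}{k-1}x_r\right)^{M+1}e^{-\Upsilon},$$
where $\Upsilon=t\,\beta_{M+1}\!\left(t\mysum{r}{1}{k-1}x_r\right)\mysum{r}{1}{k-1}x_r\geq 0$ supplies the nonnegative function named in the statement.

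Second, for each $n\leq M$, the multinomial theorem recalled just before the corollary gives
$$\left(\mysum{r}{1}{k-1}x_r\right)^{n}=\sum_{\substack{(\ell_1,\ldots,\ell_{k-1})\in\mathbb{N}^{k-1}\\\ell_1+\cdots+\ell_{k-1}=n}}\binom{n}{\ell_1,\ldots,\ell_{k-1}}\myprod{i}{1}{k-1}(\lambda_i-\lambda_{i+1})^{\ell_i}\abs{\gamma_i}^{\alpha\ell_i}.$$
Substituting this into \eqref{Texp} and exchanging the finite sums with the integrations over $I_k$ and $\R^{(k-1)d}$, the $\lambda^{(k)}$-integration combines with the multinomial coefficient to produce exactly $A(n,\ell^{(k-1)})$, while the remaining $\bar{d}\theta^{(k-1)}$-integral matches the stated expression for $C_{n,k}(V)$.

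Third, to bound the remainder, use $e^{-\Upsilon}\leq 1$ and expand the $(M+1)$-th power by the multinomial theorem once more, so that $\abs{R_{M+1}^{(k)}(t)}$ is dominated by $t^{M+1}$ times a finite sum of integrals of the form
$$\int_{I_k}\iogRd{(k-1)}\left|\wh{V}\!\left(-\mysum{i}{1}{k-1}\theta_i\right)\right|\myprod{i}{1}{k-1}|\wh{V}(\theta_i)|\,\abs{\gamma_i}^{\alpha\ell_i}\bar{d}\theta^{(k-1)}d\lambda^{(k)}.$$
Since $V\in\mathcal{S}(\Rd)$ implies $\wh{V}\in\mathcal{S}(\Rd)$, the factors $|\wh{V}(\theta_i)|$ decay faster than any polynomial, whereas each $|\gamma_r|^{\alpha\ell_r}\leq (\sum_m|\theta_m|)^{\alpha\ell_r}$ grows only polynomially, and $I_k$ is bounded; hence every such integral is finite, giving $R_{M+1}^{(k)}(t)=\mathcal{O}(t^{M+1})$ as $\tgo$.

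The main point requiring care is this third step: one must verify absolute integrability uniformly across the finitely many multinomial terms, both to legitimize the interchange of sum and integral in the second step and to control the remainder. Once the Schwartz decay of $\wh{V}$ is invoked this is immediate; the remaining work is the combinatorial identification of the coefficient $C_{n,k}(V)$, which is routine bookkeeping.
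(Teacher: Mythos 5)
Your proof is correct and follows essentially the same approach as the paper: apply the Taylor expansion \eqref{Tay.exp} and the multinomial theorem to the integrand of \eqref{Texp} to read off the coefficients $C_{n,k}(V)$, and use the rapid decay of $\wh{V}\in\mathcal{S}(\Rd)$ to control the remainder. The only variation is in the remainder bound: the paper avoids a second multinomial expansion by majorizing $\bigl(\sum_{r}(\lambda_r-\lambda_{r+1})|\gamma_r|^{\alpha}\bigr)^{M+1}$ by a constant times $\max_{1\le m\le k-1}|\theta_m|^{\alpha(M+1)}$, then splitting $\R^{(k-1)d}$ into the regions where a fixed coordinate attains the maximum, which yields the explicit estimate $Ct^{M+1}\|\wh{V}\|_{\infty}\,\|\wh{V}\|_{1}^{k-2}$ times the $L^{1}$-norm of $|\theta|^{\alpha(M+1)}\wh{V}(\theta)$; your multinomial expansion of the $(M+1)$-st power produces instead a finite family of convergent integrals, and both arguments close once the Schwartz decay is invoked, so the choice is purely a matter of bookkeeping.
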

\begin{proof}
The formula for the coefficients is obtained by applying the Taylor expansion for the  exponential function and the Binomial theorem to our expression of $T_k(t)$ in
\eqref{Texp}.

Next, we proceed to show our claim about the remainder. In order to do so, we point out that  $V\in \mathcal{S}(\Rd)$  implies that all quantities to appear below are finite. Also, the constant $C$  will depend on $k,M$ and $\alpha$ and its value may change from line to line. It is easy  to observe that for some $C>0$, we have
$$\left(\mysum{r}{1}{k-1}(\lambda_r-\lambda_{r+1})
|\gamma_r|^{\alpha}\right)^{M+1}\leq C\mymax{m}{k-1}|\theta_m|^{\alpha(M+1)}.$$
In particular, if we let $\Lambda_{r}=\set{\theta^{(k-1)}\in \R^{d(k-1)}:\mymax{m}{k-1}|\theta_m|=
|\theta_r|}$, we arrive at
\begin{align*}
\abs{ R_{M+1}^{(k)}(t)}&\leq Ct^{M+1}
\mysum{r}{1}{k-1}\int_{\Lambda_r}|\wh{V}|(-\gamma_{k-1})
|\wh{V}(\theta_r)|\theta_r|^{\alpha(M+1)}|
\prod_{i=1,i\neq r}^{k-1}|\wh{V}|(\theta_i)\bar{d}\theta^{(k-1)} \\
&\leq Ct^{M+1}||\wh{V}||_{\infty}||\wh{\F_{M+1}(V)}||_{1}
||\wh{V}||_{1}^{k-2}.
\end{align*}
Here, $\F_{M+1}$ stands for the composite of $\F$ with itself  $M+1$--times and this completes the proof.
\end{proof}
With  Corollary \ref{Term} at hand, we carry on  showing the existence of a general expansion for $Q_V^{(\alpha)}(t)$  for small time. 

\begin{thm}\label{Generalexpansion}
 For any integer $N\geq2$,
\begin{align}\label{genexpansion}
Q_V^{(\alpha)}(t)=-t\ioRd V(\theta)d\theta+\mysum{\ell}{2}{N}(-t)^{\ell}C_{\ell}(V) + \mathcal{O}(t^{N+1}),
\end{align}
as $\tgo$. Here, $$C_{\ell}(V)=
\sum\limits_{\substack{n+k=\ell \\2\leq k}}\frac{1}{n!}C_{n,k}(V),$$ 
where
$C_{n,k}(V)$ as defined  in Corollary \ref{Term}.
\end{thm}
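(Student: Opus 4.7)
The plan is to assemble Theorem \ref{Generalexpansion} by stacking three building blocks that have already been established: Lemma \ref{Qbridge} (the finite expansion of $Q_V^{(\alpha)}(t)$ in terms of bridge integrals), the Proposition (which rewrites each bridge integral in Fourier variables as the object inside $T_k(t)$), and Corollary \ref{Term} (which expands $T_k(t)$ in powers of $t$ with a controllable remainder). The proof is essentially a careful reindexing.

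First, I would apply Lemma \ref{Qbridge} with $J=N$ to write
\begin{equation*}
Q_V^{(\alpha)}(t) = -t\ioRd V(\theta)d\theta + \mysum{k}{2}{N}(-t)^k \int_{I_k}\iogRd{k} V_k(z^{(k)}) p(t,z^{(k)})\, dz^{(k)}\, d\lambda^{(k)} + R_{N+1}(t),
\end{equation*}
where by \eqref{Rem2} we have $|R_{N+1}(t)| \leq t^{N+1}\|V\|_1\|V\|_\infty^N e^{t\|V\|_\infty} = \mathcal{O}(t^{N+1})$ as $\tgo$. Next, by the Proposition (equation \eqref{inversionterm}), the inner $\R^{kd}$-integral equals the Fourier-side integrand appearing in \eqref{Texp}, so the $\lambda^{(k)}$-integrated quantity is exactly $T_k(t)$. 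This gives
\begin{equation*}
Q_V^{(\alpha)}(t) = -t\ioRd V(\theta)d\theta + \mysum{k}{2}{N}(-t)^k T_k(t) + \mathcal{O}(t^{N+1}).
\end{equation*}

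Now I would apply Corollary \ref{Term} separately for each $k\in\{2,\ldots,N\}$, choosing the expansion order $M = N-k$ so that the remainder contributes only at order $t^{N+1}$ after multiplying by $(-t)^k$. Concretely,
\begin{equation*}
(-t)^k T_k(t) = \mysum{n}{0}{N-k}\frac{(-t)^{n+k}}{n!}C_{n,k}(V) + (-t)^k R_{N-k+1}^{(k)}(t),
\end{equation*}
and by \eqref{Rem1} the last term is $\mathcal{O}(t^{N+1})$. Summing over $k$ and absorbing all the $\mathcal{O}(t^{N+1})$ contributions (finitely many, since $2\leq k\leq N$) into a single error yields
\begin{equation*}
Q_V^{(\alpha)}(t) = -t\ioRd V(\theta)d\theta + \mysum{k}{2}{N}\mysum{n}{0}{N-k}\frac{(-t)^{n+k}}{n!}C_{n,k}(V) + \mathcal{O}(t^{N+1}).
\end{equation*}

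The last step is bookkeeping. I would set $\ell = n+k$ and interchange the order of summation; the constraints $2\leq k$, $0\leq n$, and $n+k\leq N$ become $2\leq \ell \leq N$ with $2\leq k\leq \ell$ (and $n=\ell-k$). Collecting by powers of $t$ gives the desired identity with
$C_\ell(V) = \sum_{n+k=\ell,\,2\leq k}\frac{1}{n!}C_{n,k}(V)$, matching the statement. The only real obstacle is careful index juggling and ensuring that the truncation levels $M=N-k$ line up so that every error term is of order $t^{N+1}$; once that is verified, no analytic difficulty remains since the heavy lifting (Fourier inversion of the bridge density, bounds on $R_{M+1}^{(k)}$ using $\wh{V}\in\mathcal{S}(\Rd)$, and the geometric tail bound on $R_{N+1}(t)$) is already packaged in the cited statements.
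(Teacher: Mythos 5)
Your proof is correct and follows essentially the same route as the paper: stack Lemma \ref{Qbridge}, the Fourier inversion Proposition, and Corollary \ref{Term}, then reindex by $\ell=n+k$. The only cosmetic difference is that you take $J=N$ and choose the Taylor order $M=N-k$ adaptively for each $k$, so every term in the double sum already lies in the range $n+k\leq N$, whereas the paper keeps a single large $M$ for all $k$ and absorbs the extra terms with $n+k>N$ into the remainder $\tilde R_{N+1}(t)$; your bookkeeping is a touch cleaner but the argument is identical.
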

\begin{proof}
As a result of Lemma \ref{Qbridge}, Corollary \ref{Term} and  \eqref{inversionterm}, we have for any integers 
$J\geq2$ and $M\geq 0$ that
\begin{align}\label{Quasiexpansion}
Q_V^{(\alpha)}(t)=-t\ioRd V(\theta)d\theta +\mysum{k}{2}{J}\mysum{n}{0}{M}\frac{(-t)^{k+n}}{n!}C_{n,k}(V) + R_{M+1,J+1}(t), 
\end{align}
where  
$$R_{M+1,J+1}(t)=R_{J+1}(t) +\mysum{k}{2}{J}(-t)^k R_{M+1}^{(k)}.$$ 
In other words, $R_{J+1,M+1}(t)$ is the sum of all those remainders provided by Lemma \ref{Qbridge} and Corollary \ref{Term}. We also point out that due to \eqref{Rem2} and \eqref{Rem1}, we conclude
 $$R_{M+1,J+1}(t)=\mathcal{O}\left(t^{\min\set{J+1,M+3}}\right),$$ as $\tgo$.

Since $M$ and $J$ are arbitrary, given $N\geq2$, we may choose $M$ and $J$ as large as we desire so that
$$\min\set{J+1,M+3}\geq N+1$$ and such that formula \eqref{Quasiexpansion} can be decomposed as follows

\begin{equation}\label{genexpwithrem}
Q_V^{(\alpha)}(t)=-t\ioRd V(\theta)d\theta+\sum\limits_{\substack{2\leq n+k\leq N \\2\leq k}}\frac{(-t)^{k+n}}{n!}C_{n,k}(V) + \tilde{R}_{N+1}(t),
\end{equation}
where $\tilde{R}_{N+1}(t)$ is defined to be
\begin{equation*}
 \sum\limits_{\substack{ n+k\geq N+1 \\2\leq k}}\frac{(-t)^{k+n}}{n!}C_{n,k}(V) + R_{M+1,J+1}(t) .
\end{equation*}
Thus, it is easy to observe that  $\tilde{R}_{N+1}(t)=\mathcal{O}(t^{N+1})$
as $\tgo$.

The conclusion of the theorem follows by noticing that the second terms on the right hand side of both  \eqref{genexpwithrem} and  \eqref{genexpansion} are the same under our definition of $C_{\ell}(V)$.
\end{proof}

Before proceeding, we give an application concerning the  coefficients $C_{\ell}(V)$. The corollary roughly says that we can characterize the potential $V$ from the coefficients under some extra assumptions.  This corollary should be compared to the result for the trace ($\alpha=2$ case) given in \cite[Corollary 2.1]{BanSab}. 

\begin{cor}
Let $V \in \mathcal{S}(\Rd)$ be such that $\wh{V}\geq 0$. If $C_{\ell}(V)=0$ for some $\ell \geq 2$, then we must have
$V(x)=0$ for all $x \in \Rd$.
\end{cor}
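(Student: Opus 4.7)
The plan is to exploit the hypothesis $\wh{V}\geq 0$ to make every summand contributing to $C_\ell(V)$ manifestly non-negative, so that the vanishing of $C_\ell(V)$ forces each summand to vanish individually. One then reads off $V\equiv 0$ from the simplest of these summands, the one corresponding to $(n,k)=(\ell-2,2)$.

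Concretely, I would first verify that $C_{n,k}(V)\geq 0$ for every admissible pair. In the formula from Corollary \ref{Term}, each factor $\wh{V}(\theta_i)$ and $\wh{V}(-\sum_i\theta_i)$ is $\ge 0$ by assumption, each $|\gamma_m|^{\alpha\ell_i}\ge 0$, and
$$A(n,\ell^{(k-1)})=\binom{n}{\ell_1,\ldots,\ell_{k-1}}\int_{I_k}\prod_{i=1}^{k-1}(\lambda_i-\lambda_{i+1})^{\ell_i}\,d\lambda^{(k)}\geq 0,$$
since $\lambda_i>\lambda_{i+1}$ on $I_k$. Because $C_\ell(V)=\sum_{n+k=\ell,\,k\geq 2}\frac{1}{n!}C_{n,k}(V)$ is then a sum of non-negative quantities, the assumption $C_\ell(V)=0$ forces $C_{n,k}(V)=0$ for every pair $(n,k)$ with $n+k=\ell$ and $k\geq 2$.

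In particular, the summand $(n,k)=(\ell-2,2)$ vanishes. For $k=2$ the multi-index collapses to the single entry $\ell_1=n=\ell-2$, and the elementary computation $\int_{I_2}(\lambda_1-\lambda_2)^{\ell-2}\,d\lambda_2\,d\lambda_1=\frac{1}{\ell(\ell-1)}$ gives
$$C_{\ell-2,2}(V)=\frac{1}{\ell(\ell-1)}\int_{\Rd}\wh{V}(\theta)\,\wh{V}(-\theta)\,|\theta|^{\alpha(\ell-2)}\,\bar{d}\theta.$$
Since $V$ is real-valued we have $\wh{V}(-\theta)=\overline{\wh{V}(\theta)}=\wh{V}(\theta)$, the last equality because $\wh{V}$ is already real under the hypothesis, so the integrand equals $|\wh{V}(\theta)|^2|\theta|^{\alpha(\ell-2)}\geq 0$.

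Vanishing of this non-negative integral therefore forces $|\wh{V}(\theta)|^2|\theta|^{\alpha(\ell-2)}=0$ almost everywhere, hence $\wh{V}(\theta)=0$ for a.e.\ $\theta\in\Rd$ (the weight $|\theta|^{\alpha(\ell-2)}$ is positive off the null set $\{0\}$, and in the edge case $\ell=2$ it is identically $1$). Since $\wh{V}$ is continuous on $\Rd$ as an element of $\mathcal{S}(\Rd)$, we conclude $\wh{V}\equiv 0$ and hence $V\equiv 0$ by Fourier inversion. There is no genuine obstacle here: the proof is purely a positivity argument, and the only computation required beyond reading off signs is the explicit evaluation of $A(\ell-2,(\ell-2))$ that produces the clean Plancherel-type expression for $C_{\ell-2,2}(V)$.
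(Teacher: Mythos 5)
Your proof is correct and follows essentially the same route as the paper: positivity of all $C_{n,k}(V)$ under $\wh V\geq 0$, forcing each summand (in particular the $(n,k)=(\ell-2,2)$ term) to vanish, then reading off $\wh V\equiv 0$ from the resulting Plancherel-type integral. The only cosmetic difference is that you conclude via continuity of $\wh V$ and Fourier inversion while the paper invokes Plancherel to get $\int_{\Rd}|V|^2\,dx=0$; both are immediate once $|\wh V|^2=0$ a.e.\ has been established.
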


\begin{proof}
By Theorem \ref{Generalexpansion} and corollary \ref{Term}, we see that  $C_{\ell}(V)\geq 0$ for all $\ell \geq 2$ when  $\wh{V}\geq 0$. In particular, the condition
 $C_{\ell}(V)= 0$ for some  $\ell \geq 2$ implies that 
\begin{equation*} 
C_{\ell-2,2}(V)={\ell-2 \choose\ell- 2} \int_{I_2}(\lambda_1-\lambda_{2})^{\ell-2}d\lambda^{(2)}
\int_{\Rd}\wh{V}(-\theta_1)\wh{V}(\theta_1)
\abs{\theta_1}^{\alpha(\ell-2)}\bar{d}\theta_1=0.
\end{equation*}
Therefore, we must have $\wh{V}(-\theta_1)\wh{V}(\theta_1)=0$ for all $\theta_1\in \Rd$. Now by applying Plancherel
identity we have
\begin{equation*}
\int_{\Rd}|V(x)|^2dx=\int_{\Rd}\wh{V}(-\theta_1)\wh{V}(\theta_1)\bar{d}\theta_1=0
\end{equation*}
and this gives the claimed result. 
\end{proof}
\section{computation of coefficients}\label{sec:coeff}
In this section we write down explicitly  the first five coefficients of the asymptotic expansion given in  \eqref{genexpansion}.  This also proves Theorem
\ref{5termexp}. All the results in the previous section also hold  for $\alpha=2$. Therefore we will consider $0<\alpha\leq 2.$ 

In order to find the coefficients $C_3(V)$, $C_4(V)$ and $C_5(V)$,  we will resort to Lemma \ref{simpleformint} below.   We start by observing that by means of the inversion formula, it follows easily that 
\begin{equation}\label{easyterm}
C_{0,k}(V)=\frac{1}{k!}\ioRd V^k(\theta)d\theta,
\end{equation}
for any integer $k\geq2$. 

\begin{lem} \label{simpleformint}
Let $k\geq 2$ be an integer. Assume that $\set{\ell_i, i\in \set{1,..., k-1}}$ is a  sequence of nonnegative real numbers satisfying
\begin{equation}\label{lcond}
\mysum{i}{1}{k-1}\ell_i=n,
\end{equation} 
for some positive real number $n$. Then
\begin{enumerate}
\item[(a)] If $k=2$, we have
$$\int_{I_2}(\lambda_1-\lambda_{2})^{n}d\lambda^{(2)}=\frac{1}{(1+n)(2+n)}.$$
\item[(b)] If $k\geq3$, we obtain
\begin{align*}
\int_{I_k}\myprod{i}{1}{k-1}(\lambda_i-\lambda_{i+1})^{\ell_i}d\lambda^{(k)}=
\frac{1}{(k+n)(\ell_{k-1}+1)}\myprod{i}{1}{k-2}
\int_{0}^{1}(1-s)^{\ell_i}s^{ k+n-(i+1 +\mysum{j}{1}{i}\ell_j)}ds.
\end{align*}
\end{enumerate}
\end{lem}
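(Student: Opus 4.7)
The plan is to treat part (a) by a direct two-variable iterated integration, and part (b) by integrating out the variables $\lambda_k,\lambda_{k-1},\ldots,\lambda_2$ one at a time via the rescaling $\lambda_j=s\lambda_{j-1}$, each such step peeling off exactly one Beta-type integral of the form appearing on the right-hand side.

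For part (a), the inner integration gives $\int_0^{\lambda_1}(\lambda_1-\lambda_2)^n\,d\lambda_2=\lambda_1^{n+1}/(n+1)$, after which integrating in $\lambda_1$ from $0$ to $1$ yields $1/((n+1)(n+2))$.

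For part (b), I would first integrate $\lambda_k$ from $0$ to $\lambda_{k-1}$; only the last factor $(\lambda_{k-1}-\lambda_k)^{\ell_{k-1}}$ depends on $\lambda_k$, producing $\lambda_{k-1}^{\ell_{k-1}+1}/(\ell_{k-1}+1)$ and accounting for the prefactor $1/(\ell_{k-1}+1)$. Then, successively for $j=k-1,k-2,\ldots,2$, I apply the substitution $\lambda_j=s\lambda_{j-1}$ (so $d\lambda_j=\lambda_{j-1}\,ds$, $s\in(0,1)$), which separates a pure power of $\lambda_{j-1}$ from an $s$-integral of the shape $\int_0^1(1-s)^{\ell_{j-1}}s^{M_j}\,ds$. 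After all $k-2$ such substitutions only $\lambda_1$ is left, raised to the power $k+n-1$, and $\int_0^1\lambda_1^{k+n-1}\,d\lambda_1=1/(k+n)$ produces the remaining factor.

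The only point requiring genuine care is the inductive bookkeeping of the exponents $M_j$. After integrating out $\lambda_k,\ldots,\lambda_{j+1}$ the integrand has the form $c\cdot\lambda_j^{M_j}\prod_{i=1}^{j-1}(\lambda_i-\lambda_{i+1})^{\ell_i}$, and the recursion $M_{j-1}=\ell_{j-1}+M_j+1$ (coming from the substitution) with base case $M_{k-1}=\ell_{k-1}+1$ unfolds to $M_j=k+n-j-\sum_{i=1}^{j-1}\ell_i$. This is exactly the exponent of $s$ displayed in the statement when $i=j-1$, which closes the argument. I do not expect any real obstacle beyond this bookkeeping, since each step is a one-variable substitution.
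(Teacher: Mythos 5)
Your proof is correct and rests on the same key idea as the paper's: the rescaling $\lambda_{j}=s\,\lambda_{j-1}$ turns each factor $(\lambda_{j-1}-\lambda_j)^{\ell_{j-1}}$ into a Beta-type integral in $s$. The only difference is organizational: the paper performs the change of variables $(\lambda_2,\ldots,\lambda_k)\mapsto(s_1,\ldots,s_{k-1})$ simultaneously (computing the Jacobian of the resulting upper-triangular map), while you peel the variables off one at a time from the inside, avoiding the multivariate Jacobian; your recursion $M_{j-1}=M_j+\ell_{j-1}+1$ with $M_{k-1}=\ell_{k-1}+1$ unfolds exactly to the exponents $k+n-(i+1+\sum_{j\le i}\ell_j)$ in the statement.
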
  
\begin{proof}
We only need to prove {(b)}. Let $\lambda_1\in (0,1)$ be fixed. Consider the following change of variables 
\begin{equation*}\label{cv}
\lambda_{i+1}=\lambda_i s_i,
\end{equation*}
for $i\in \set{1,...,k-1}$. Using the fact that $0<\lambda_{i+1}<\lambda_i$  we must have that $s_i\in(0,1)$.
Notice that this change of variables yields
\begin{equation}\label{elam}
\lambda_{i+1}=\lambda_1\myprod{j}{1}{i}s_j. 
\end{equation}
Thus, the Jacobian associated to this change of variables is the determinant of an upper triangular matrix and it is given explicitly by the following formula. 
$$\frac{\partial(\lambda_2,...,\lambda_{k})}{\partial(s_1,...,s_{k-1})}=\lambda_1^{k-1}\myprod{i}{1}{k-2}s_i^{k-(i+1)}.$$
Observe that by \eqref{elam} and \eqref{lcond} we have 
\begin{align*}
\myprod{i}{2}{k-1}\lambda_i^{\ell_i}&= 
\myprod{i}{2}{k-1}\left(\lambda_1\myprod{j}{1}{i-1}s_j\right)^{\ell_i}=\lambda_1^{\mysum{j}{2}{k-2}\ell_j}
\left(\myprod{i}{1}{k-2}s_i^{\mysum{j}{i+1}{k-1}\ell_j}
\right)
=\lambda_1^{n-\ell_1}\myprod{i}{1}{k-2}s_i^{n-\mysum{j}{1}{i}\ell_j}.
\end{align*}
From this we conclude that 
\begin{align*}
\myprod{i}{1}{k-1}(\lambda_i-\lambda_{i+1})^{\ell_i}&=\myprod{i}{1}{k-1}\lambda_i^{\ell_i}(1-s_i)^{\ell_i}
=\lambda_1^{\ell_1}(1-s_{k-1})^{\ell_{k-1}}
\myprod{i}{1}{k-2}(1-s_{i})^{\ell_i}\myprod{i}{2}{k-1}\lambda_i^{\ell_i} \\ 
&=\lambda_1^{n}(1-s_{k-1})^{\ell_{k-1}}
\myprod{i}{1}{k-2}(1-s_{i})^{\ell_i}\myprod{i}{1}{k-2}s_i^{n-\mysum{j}{1}{i}\ell_j}.
\end{align*}
As a result, integrating both sides of the above identity, we see that $(b)$ is a consequence of the following equality.
\begin{eqnarray*}
\int_{I_k}\myprod{i}{1}{k-1}(\lambda_i-\lambda_{i+1})^{\ell_i}d\lambda^{(k)}&=& 
\int_{0}^{1}\lambda_1^{n+k-1}d\lambda_1\int_{0}^{1}(1-s_{k-1})^{\ell_{k-1}}ds_{k-1}\\
&&\times \int_{[0,1]^{k-2}}
\myprod{i}{1}{k-2}(1-s_{i})^{\ell_i}s_i^{n+k-(i+1+\mysum{j}{1}{i}\ell_j)}ds^{(k-2)}.
\end{eqnarray*}
\end{proof}

For the computations to be performed below is worth recalling that 
\begin{eqnarray*}
\mathcal{E}_{\alpha}(V)=\int_{\Rd}\F V(\theta)V(\theta)d\theta
=\ioRd \wh{V}(-\theta)\wh{V}(\theta)|\theta_1|^{\alpha}\bar{d}\theta
=\int_{\Rd} |\wh{V}(\theta)|^2|\theta|^{\alpha}\bar{d}\theta.
\end{eqnarray*}

\begin{lem}    
\begin{equation*}
C_{3}(V)= \frac{1}{3!}\left(\ioRd V^3(\theta)d\theta +\mathcal{E}_{\alpha}(V)\right).
\end{equation*}
\end{lem}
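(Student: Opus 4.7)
The plan is to unpack the definition $C_{\ell}(V)=\sum_{n+k=\ell,\,k\geq 2}\frac{1}{n!}C_{n,k}(V)$ for $\ell=3$, enumerate the only two admissible pairs, and evaluate each piece using the explicit formulas already at hand. For $\ell=3$ the constraint $n+k=3$ with $k\geq 2$ forces exactly $(n,k)\in\{(0,3),(1,2)\}$, so
\[
C_{3}(V)=C_{0,3}(V)+C_{1,2}(V).
\]

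The first summand is immediate from \eqref{easyterm}, giving $C_{0,3}(V)=\frac{1}{3!}\int_{\R^d}V^3(\theta)\,d\theta$. For the second summand I apply the formula for $C_{n,k}(V)$ from Corollary \ref{Term} with $n=1$, $k=2$: the only admissible exponent tuple $\ell^{(k-1)}=(\ell_1)\in\mathbb{N}$ with $\ell_1=n=1$ is $\ell_1=1$, and the multinomial coefficient $\binom{1}{1}=1$. Part (a) of Lemma \ref{simpleformint} with $n=1$ yields
\[
\int_{I_2}(\lambda_1-\lambda_2)\,d\lambda^{(2)}=\frac{1}{(1+1)(2+1)}=\frac{1}{6},
\]
so that $A(1,(1))=\frac{1}{6}$. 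Substituting into the expression for $C_{1,2}(V)$ gives
\[
C_{1,2}(V)=\frac{1}{6}\int_{\R^d}\wh{V}(-\theta_1)\wh{V}(\theta_1)\,|\theta_1|^{\alpha}\,\bar{d}\theta_1=\frac{1}{6}\,\mathcal{E}_{\alpha}(V),
\]
where the last equality uses the identity $\mathcal{E}_{\alpha}(V)=\int_{\R^d}|\wh{V}(\theta)|^2|\theta|^{\alpha}\,\bar{d}\theta$ recorded just above the statement.

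Adding the two contributions yields
\[
C_{3}(V)=\frac{1}{3!}\int_{\R^d}V^3(\theta)\,d\theta+\frac{1}{6}\mathcal{E}_{\alpha}(V)=\frac{1}{3!}\left(\int_{\R^d}V^3(\theta)\,d\theta+\mathcal{E}_{\alpha}(V)\right),
\]
as claimed. There is no genuine obstacle here: the only care needed is to enumerate correctly the pairs $(n,k)$ with $k\geq 2$ summing to $\ell=3$, and to verify that the single admissible multi-index in the $(n,k)=(1,2)$ case matches the Dirichlet form exactly. All remaining work is bookkeeping.
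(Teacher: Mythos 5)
Your proof is correct and follows essentially the same route as the paper: decompose $C_3(V)=C_{0,3}(V)+C_{1,2}(V)$ via Theorem \ref{Generalexpansion}, read off $C_{0,3}(V)$ from \eqref{easyterm}, and evaluate $C_{1,2}(V)$ using Corollary \ref{Term} and Lemma \ref{simpleformint}(a) to obtain $A(1,1)=\tfrac{1}{6}$, then identify the resulting integral with $\mathcal{E}_\alpha(V)$. The only cosmetic difference is that you invoke the Fourier-side formula for $\mathcal{E}_\alpha(V)$ directly, whereas the paper routes through $\frac{1}{6}\int V\,\F V$ first; both are the same identity.
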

\begin{proof}

By Theorem \ref{Generalexpansion}, we have
$$C_{3}(V)=C_{0,3}(V)+C_{1,2}(V).$$ 
From \eqref{easyterm}, it suffices to compute $C_{1,2}(V).$  
Following Corollary \ref{Term}, we have, by Plancherel Theorem and  \eqref{Fourierstable},  that
\begin{align*}
C_{1,2}(V)&=A(1,1)\ioRd \wh{V}(-\theta_1)\wh{V}(\theta_1)|\theta_1|^{\alpha}\bar{d}\theta_1 
=\frac{1}{6}\ioRd V(\theta) \F V(\theta)d\theta, 
\end{align*}
which gives the formula above. 
\end{proof}

\begin{lem}\label{4thterm}
\begin{equation*}
C_{4}(V)=\frac{1}{4!}\left(\ioRd V^4(\theta)d\theta+2\ioRd V^2(\theta)\F V(\theta)d\theta +\int_{\Rd}\abs{\F V(\theta)}^2d\theta\right).
\end{equation*}
\end{lem}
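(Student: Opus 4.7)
The plan is to decompose $C_4(V)$ via Theorem \ref{Generalexpansion} into three pieces,
$$C_4(V) = C_{0,4}(V) + C_{1,3}(V) + \frac{1}{2!}C_{2,2}(V),$$
and evaluate each separately.  The first term is immediate: by \eqref{easyterm},
$C_{0,4}(V) = \frac{1}{4!}\int_{\Rd} V^4(\theta)\,d\theta$, which already supplies the $V^4$-contribution in the statement.

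For $C_{2,2}(V)$, the multinomial sum collapses to the single choice $\ell_1=2$. Using Lemma \ref{simpleformint}(a) with $n=2$, the $\lambda$-integral is $\frac{1}{3\cdot 4}=\frac{1}{12}$, and the Fourier-side integral is
$\int_{\Rd}|\wh{V}(\theta_1)|^2|\theta_1|^{2\alpha}\,\bar{d}\theta_1$.
Since $|\theta_1|^{\alpha}\wh{V}(\theta_1)=\wh{\F V}(\theta_1)$ by \eqref{Fourierstable}, Plancherel's identity (applied at $\xi=0$) yields $\int_{\Rd}|\F V(\theta)|^2 d\theta$. Thus $\frac{1}{2!}C_{2,2}(V)=\frac{1}{24}\int_{\Rd}|\F V|^2 d\theta$, producing the $|\F V|^2$-term with the correct $1/4!$ factor.

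The main work is in $C_{1,3}(V)$, which should deliver $\frac{2}{4!}\int V^2 \F V\,d\theta$. Here the multinomial sum runs over $(\ell_1,\ell_2)\in\{(1,0),(0,1)\}$. First I would apply Lemma \ref{simpleformint}(b) with $k=3$, $n=1$ to both choices; in each case the $\lambda$-integral reduces to $\frac{1}{24}$ (with multinomial coefficient $1$). Then for each of the two Fourier-side integrals
$$\iogRd{2}\wh{V}(-\theta_1-\theta_2)\wh{V}(\theta_1)\wh{V}(\theta_2)|\theta_1|^{\alpha}\,\bar{d}\theta_1\bar{d}\theta_2 \quad\text{and}\quad \iogRd{2}\wh{V}(-\theta_1-\theta_2)\wh{V}(\theta_1)\wh{V}(\theta_2)|\theta_1+\theta_2|^{\alpha}\,\bar{d}\theta_1\bar{d}\theta_2,$$
I would absorb $|\theta_1|^{\alpha}$ (resp.\ $|\theta_1+\theta_2|^{\alpha}$) into one copy of $\wh V$ to recognize $\wh{\F V}$, then use the convolution identity $\wh{V^2}(\xi)=\int \wh V(\eta)\wh V(\xi-\eta)\,\bar{d}\eta$ on the remaining two factors of $\wh V$, and finally apply Plancherel. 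Both integrals then collapse to $\int_{\Rd} V^2(\theta)\F V(\theta)\,d\theta$, so $C_{1,3}(V) = 2\cdot\frac{1}{24}\int V^2\F V\,d\theta = \frac{2}{4!}\int V^2\F V\,d\theta$. Summing the three pieces gives the claimed formula.

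The only point requiring care is the bookkeeping for the $(\ell_1,\ell_2)=(1,0)$ integral, where the weight $|\theta_1|^{\alpha}$ is attached to a variable that does not match the ``outer'' Fourier variable $-\theta_1-\theta_2$. The natural remedy is to group the two $\wh V$ factors whose arguments sum to $-\theta_1$ (namely $\wh V(\theta_2)$ and $\wh V(-\theta_1-\theta_2)$) into $\wh{V^2}(-\theta_1)$, leaving $\wh{\F V}(\theta_1)\wh{V^2}(-\theta_1)$ to be converted by Plancherel to $\int \F V\cdot V^2\,d\theta$; an analogous regrouping (pairing $\wh V(\theta_1)$ with $\wh V(\theta_2)$ to form $\wh{V^2}(\theta_1+\theta_2)$) handles the $(0,1)$ case. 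Once this symmetrization is carried out, the desired identity follows with no further difficulty.
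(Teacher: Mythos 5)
Your proof is correct and follows essentially the same route as the paper: decompose via $C_4 = C_{0,4} + C_{1,3} + \tfrac{1}{2!}C_{2,2}$, evaluate each $A$-coefficient with Lemma \ref{simpleformint}, and reduce both Fourier-side integrals in $C_{1,3}$ to $\int_{\Rd} V^2 \F V$ by absorbing the $|\cdot|^{\alpha}$ weight into one $\wh{V}$ factor and convolving the remaining two via Plancherel. The regrouping you flag for the $(1,0)$ case is exactly the maneuver the paper carries out, so there is nothing missing.
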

\begin{proof}
By Theorem \ref{Generalexpansion},
$$C_{4}(V)=C_{0,4}(V)+C_{1,3}(V)+\frac{1}{2!}C_{2,2}(V).$$

By Corollary \ref{Term} with $n=1$ and $k=3$, we have
\begin{align*}
C_{1,3}(V)=&A(1,(1,0))\iogRd{2}\wh{V}(\theta_2)\wh{V}(\theta_1)
\wh{V}(-\theta_1-\theta_2)|\theta_1|^{\alpha}\bar{d}\theta_2\bar{d}\theta_1 \\+
&A(1,(0,1))\iogRd{2}\wh{V}(\theta_2)\wh{V}(\theta_1)
\wh{V}(-\theta_1-\theta_2)|\theta_1+\theta_2|^{\alpha}\bar{d}\theta_2\bar{d}\theta_1.
\end{align*}

From Lemma \ref{simpleformint}, we obtain
\begin{align*}
A\left(1,(1,0)\right)&={1 \choose 1,0}\frac{1}{4}\int_{0}^1(1-s)sds=\frac{1}{4!}, \\ \nonumber
A\left(1,(0,1)\right)&={1 \choose 0,1}\frac{1}{4\cdot 2}\int_{0}^1s^2ds
=\frac{1}{4!}.
\end{align*}

On the other hand, due to the basic properties of the Fourier transform, 
\begin{align*}
&\iogRd{2}\wh{V}(\theta_2)\wh{V}(\theta_1)
\wh{V}(-\theta_1-\theta_2)|\theta_1+\theta_2|^{\alpha}\bar{d}\theta_2\bar{d}\theta_1\\
&=\int_{\Rd}\left(\int_{\Rd}\wh{V}(\theta_2)\wh{\F V}(-\theta_1-\theta_2)\bar{d}\theta_2\right)\wh{V}(\theta_1)\bar{d}\theta_1\\
&=\int_{\Rd}\left(\int_{\Rd}e^{\dot{\iota}\theta\cdot
\theta_1}V(\theta)\F V(\theta)d\theta\right)\wh{V}(\theta_1)\bar{d}\theta_1 \\
&=\ioRd V^2(\theta)\F V(\theta)d\theta.
\end{align*}

A similar argument yields
\begin{align*}
&\iogRd{2}\wh{V}(\theta_2)\wh{V}(\theta_1)
\wh{V}(-\theta_1-\theta_2)|\theta_1|^{\alpha}\bar{d}\theta_2\bar{d}\theta_1 \\ 
&=\int_{\Rd}|\theta_1|^{\alpha}\wh{V}(\theta_1)
\left(\int_{\Rd}\wh{V}(-\theta_1-\theta_2)\wh{V}(\theta_2)\bar{d}\theta_2\right)\bar{d}\theta_1 \\
&=\int_{\Rd}|\theta_1|^{\alpha}\wh{V}(\theta_1)
\left(\int_{\Rd}e^{\dot{\iota}  \theta_1\cdot \theta}V^2(\theta)d\theta\right)\bar{d}\theta_1 \\
&=\ioRd V^2(\theta)\F V(\theta)d\theta.
\end{align*}

Thus, we arrive at
\begin{align*}
C_{1,3}(V)&=\frac{2}{ 4!}\ioRd V^2(\theta)\F V(\theta)d\theta.
\end{align*}

Next,
\begin{align*}
C_{2,2}(V)&= A(2,2)
\ioRd \wh{V}(-\theta_1)\wh{V}(\theta_1)|\theta_1|^{2\alpha}\bar{d}\theta_1 
=\frac{2!}{ 4!}\int_{\Rd}\abs{\F V(\theta)}^2d\theta.
\end{align*}
Therefore, the announced formula for $C_4(V)$ follows from the above identities.
\end{proof}

\begin{lem}\label{5thterm}
\begin{align*}
C_5(V)=\frac{1}{5!}\Biggl(\int_{\Rd}V^5(\theta)d\theta &+
2\int_{\Rd}V^3(\theta)\F V(\theta)d\theta 
+2\int_{\Rd}V^2(\theta)\F_2 V(\theta)d\theta \\ 
&+\int_{\Rd}V(\theta)\abs{\F V(\theta)}^{2}d\theta + \mathcal{E}_{\alpha}\left(\F V\right)+\mathcal{E}_{\alpha}\left(V^2\right)\Biggr),
\end{align*}
where  $\F_2$ denotes the composition  of $\F$  with itself.
\end{lem}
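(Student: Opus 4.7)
The plan is to imitate the computations of $C_3(V)$ and $C_4(V)$ carried out in the previous two lemmas, applied to $C_5(V)$. By Theorem \ref{Generalexpansion}, we decompose
\begin{equation*}
C_5(V)=C_{0,5}(V)+C_{1,4}(V)+\tfrac{1}{2!}C_{2,3}(V)+\tfrac{1}{3!}C_{3,2}(V),
\end{equation*}
and treat each summand in turn. The term $C_{0,5}(V)=\tfrac{1}{5!}\int_{\R^d}V^5(\theta)d\theta$ is immediate from \eqref{easyterm}, and $C_{3,2}(V)$ is the easiest of the remaining three: Corollary \ref{Term} with $k=2$ reduces it to a single integral $\int\widehat{V}(-\theta)\widehat{V}(\theta)|\theta|^{3\alpha}\bar d\theta$, which after writing $|\theta|^{3\alpha}=|\theta|^\alpha\cdot|\theta|^\alpha\cdot|\theta|^\alpha$ and pairing two of the factors with each copy of $\widehat V$ is exactly $\mathcal{E}_{\alpha}(\F V)$; Lemma \ref{simpleformint}(a) supplies the numerical constant $A(3,3)=1/20$, and a short bookkeeping check verifies that $\tfrac{1}{3!}\cdot\tfrac{1}{20}=\tfrac{1}{5!}$.

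For the remaining two coefficients $C_{1,4}(V)$ and $C_{2,3}(V)$, I would loop over the multi-indices $\ell^{(k-1)}$ with $|\ell|=n$ appearing in Corollary \ref{Term}. Part (b) of Lemma \ref{simpleformint}, together with the multinomial coefficient, produces the weights $A(n,\ell^{(k-1)})$ explicitly; a direct calculation shows all these weights coincide and equal $1/120$, which is the source of the uniform prefactor $1/5!$ in the statement. The corresponding Fourier integrals are then reduced to spatial integrals by the four- and three-fold Plancherel identity
\begin{equation*}
\int f_1 f_2 f_3 f_4\,d\theta=\iogRd{3}\widehat{f_1}\bigl(-{\scriptstyle\sum}\eta_i\bigr)\widehat{f_2}(\eta_1)\widehat{f_3}(\eta_2)\widehat{f_4}(\eta_3)\,\bar d\eta_1\bar d\eta_2\bar d\eta_3,
\end{equation*}
combined with the identity $|\theta|^{\alpha j}\widehat V(\theta)=\widehat{\F_j V}(\theta)$. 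For the ``end'' multi-indices $(1,0,0)$, $(0,0,1)$, $(2,0,0)$, $(0,0,2)$ the exponent factor attaches cleanly to a single $\widehat V$, yielding $\int V^3\F V\,d\theta$ and $\int V^2\F_2 V\,d\theta$ twice each after the Plancherel reduction.

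The main obstacle, and the only step that requires a genuine trick beyond direct Plancherel, is the treatment of the \emph{mixed} multi-indices $(0,1,0)$, $(1,1)$ and $(0,2)$, where the weight contains $|\theta_1+\theta_2|^\alpha$, i.e.\ a sum of frequency variables. Here I would perform the change of variables $\sigma=\theta_1+\theta_2$, so that $\int\widehat V(\sigma-\theta_1)\widehat V(\theta_1)\bar d\theta_1=\widehat{V^2}(\sigma)$ by the standard convolution/product identity for the Fourier transform; then the factor $|\sigma|^\alpha\widehat{V^2}(\sigma)=\widehat{\F(V^2)}(\sigma)$ can be absorbed, producing either $\mathcal{E}_{\alpha}(V^2)$ (in the $k=4$ case) or $\int V|\F V|^2 d\theta$ (in the $k=3$, $(1,1)$ case, where the same factor of $|\theta_1+\theta_2|^\alpha$ pairs with the $|\theta_1|^\alpha$ already present to give two copies of $\F V$). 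Adding up the six resulting spatial integrals with their common constant $1/5!$ yields precisely the stated formula; the only potential source of error is the bookkeeping of the multinomial and Beta-type integrals in Lemma \ref{simpleformint}, which I would double-check term by term.
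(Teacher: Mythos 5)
Your proposal follows the paper's proof essentially step by step: decompose $C_5(V)=C_{0,5}(V)+C_{1,4}(V)+\tfrac{1}{2!}C_{2,3}(V)+\tfrac{1}{3!}C_{3,2}(V)$, reduce each multi-index contribution via Plancherel/convolution, and compute the $A$-weights from Lemma \ref{simpleformint}. Your change-of-variables trick $\sigma=\theta_1+\theta_2$ together with $\int\wh{V}(\theta_1)\wh{V}(\sigma-\theta_1)\bar d\theta_1=\wh{V^2}(\sigma)$ is the same idea the paper implements by choosing the order of integration over the $\theta_i$, and it handles the mixed indices correctly.

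Two bookkeeping slips, though, both of which you would catch when carrying the computation out. First, the multi-indices $(2,0,0)$ and $(0,0,2)$ you list do not occur: for $C_5$ the contributing pairs are $(n,k)\in\{(0,5),(1,4),(2,3),(3,2)\}$, so the $n=2$ case has $k=3$ and hence two-component indices $(2,0),(1,1),(0,2)$ — you also list $(0,2)$ in the ``mixed'' class, so there is an internal inconsistency here. Second, the claim that all the weights $A(n,\ell^{(k-1)})$ coincide and equal $1/120$ is false: for $C_{1,4}$ ($n=1$) they equal $1/5!$, but for $C_{2,3}$ ($n=2$) they equal $2/5!$, and $A(3,3)=1/20=3!/5!$. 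What is uniform is the combination $\tfrac{1}{n!}A(n,\ell^{(k-1)})=1/5!$, which is precisely what the decomposition in Theorem \ref{Generalexpansion} multiplies by, so the final prefactor is $1/5!$ for every term — but as written your weight claim, if used literally, would give the wrong constants in front of $C_{2,3}$ and $C_{3,2}$. With those corrections your proof plan reproduces the paper's.
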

\begin{proof}

Once again, Theorem \ref{Generalexpansion} gives 
$$C_5(V)=C_{0,5}(V)+C_{1,4}(V)+\frac{1}{2!}C_{2,3}(V)+\frac{1}{3!}C_{3,2}(V).$$ 
The first term $C_{0,5}(V)$ follows from \eqref{easyterm}. From Corollary \ref{Term} with $n=1$ and $k=4$, we have
\begin{align}\label{Diff}
C_{1,4}(V)&=A(1,\, (1,0,0))\int_{\R^{3d}}\wh{V}(-\mysum{i}{1}{3}\theta_i)\myprod{i}{1}{3}\wh{V}(\theta_i)
|\theta_1|^{\alpha}\bar{d}\theta^{(3)} \\ \label{second}
&+A(1,\, (0,1,0))\int_{\R^{3d}}\wh{V}(-\mysum{i}{1}{3}\theta_i)\myprod{i}{1}{3}\wh{V}(\theta_i)
\abs{\mysum{m}{1}{2}\theta_m}^{\alpha}\bar{d}\theta^{(3)} \\ \label{third}
&+A(1,\, (0,0,1))\int_{\R^{3d}}\wh{V}(-\mysum{i}{1}{3}\theta_i)\myprod{i}{1}{3}\wh{V}(\theta_i)
\abs{\mysum{m}{1}{3}\theta_m}^{\alpha}\bar{d}\theta^{(3)}.
\end{align}

The most difficult term to compute in the above equality is the one appearing in \eqref{second}  and we proceed to deal with this one first.  By integrating first with respect to $\theta_3$ and 
applying Plancherel formula gives 
\begin{align}\label{second1} 
&\int_{\R^{3d}}\wh{V}(-\mysum{i}{1}{3}\theta_i)\myprod{i}{1}{3}\wh{V}(\theta_i)
\abs{\mysum{m}{1}{2}\theta_m}^{\alpha}\bar{d}\theta^{(3)} \\ \nonumber 
&=\int_{\Rd}V^2(\theta)\left(\int_{\R^{2d}}
\wh{V}(\theta_1)\wh{V}(\theta_2)\abs{\theta_1+\theta_2}^{\alpha}
e^{\dot{\iota}\theta\cdot(\theta_1+\theta_2)}\bar{d}
\theta_1\bar{d}\theta_2
\right)d\theta.
\end{align}
Consider the change of variable $\theta_2=z-\theta_1$,
where the independent variable is $\theta_2$. Then, the   
integral in \eqref{second1} between parenthesis  equals
\begin{align*}
\int_{\R^{2d}}
\wh{V}(\theta_1)\wh{V}(z-\theta_1)\abs{z}^{\alpha}
e^{\dot{\iota}\theta\cdot z}\bar{d}\theta_1\bar{d}z. 
\end{align*}
Thus, integrating the last expression with respect to $\theta_1$ gives 
\begin{align*}
\int_{\Rd}\abs{z}^{\alpha}e^{\dot{\iota}\theta\cdot z}
\left(\int_{\Rd}e^{-\dot{\iota}\eta\cdot z}V^2(\eta)d\eta\right)\bar{d}z=\int_{\Rd}\abs{z}^{\alpha}
e^{\dot{\iota}\theta\cdot z}\wh{V^2}(z)\bar{d}z=
\F V^2(\theta).
\end{align*}
In other words, we have shown that
\begin{equation*}
\int_{\R^{3d}}\wh{V}(-\mysum{i}{1}{3}\theta_i)\myprod{i}{1}{3}\wh{V}(\theta_i)
\abs{\mysum{m}{1}{2}\theta_m}^{\alpha}\bar{d}\theta^{(3)}=
\mathcal{E}_{\alpha}(V^2).
\end{equation*}

Next, we claim that the other two integral terms in \eqref{third} equal 
$$\int_{\Rd}V^3(\theta)\F V(\theta)d\theta.$$ 
To see this, it suffices to consider the following equalities.
\begin{align*}
&\int_{\R^{2d}}\left(\int_{\Rd}\wh{V}(-\theta_1-\theta_2-\theta_3)\abs{\theta_1
+\theta_2+\theta_3}^{\alpha}\wh{V}(\theta_3)\bar{d}\theta_3\right)\wh{V}(\theta_1)\wh{V}(\theta_2)\bar{d}\theta_1\bar{d}\theta_2 \\
&=\int_{\R^{2d}}\left(\int_{\Rd}e^{\dot{\iota} \theta \cdot (\theta_1+\theta_2)}V(\theta)\F V(\theta)d\theta\right)
\wh{V}(\theta_1)\wh{V}(\theta_2)\bar{d}\theta_1\bar{d}\theta_2\\
&=\int_{\R^{d}}\left(\int_{\R^{2d}}e^{\dot{\iota} \theta \cdot (\theta_1+\theta_2)}\wh{V}(\theta_1)\wh{V}(\theta_2)\bar{d}\theta_1\bar{d}\theta_2 \right)
V(\theta)\F V(\theta)d\theta\\
&=\int_{\Rd}V^3(\theta)\F V(\theta)d\theta,
\end{align*}
and
\begin{align*}
&\int_{\R^{2d}}\left(\int_{\Rd}\wh{V}(-\theta_1-\theta_2-\theta_3)\wh{V}(\theta_3)\bar{d}\theta_3\right)\abs{\theta_1
}^{\alpha}\wh{V}(\theta_1)\wh{V}(\theta_2)\bar{d}\theta_1\bar{d}\theta_2 \\
&=\int_{\R^{2d}}\left(\int_{\Rd}e^{\dot{\iota} \theta \cdot (\theta_1+\theta_2)}V^2(\theta)d\theta\right)
\wh{\F V}(\theta_1)\wh{V}(\theta_2)\bar{d}\theta_1\bar{d}\theta_2\\
&=\int_{\Rd}V^3(\theta)\F V(\theta)d\theta.
\end{align*}

As far for the quantities, $A(1, \, (1,0,0))$, $A(1, \, (0,1,0))$ and $A(1, \, (0,0,1))$ we have 
\begin{align*}
A(1, \, (1,0,0))&=\frac{1}{5}\int_{0}^{1}(1-s)s^2ds
\int_{0}^{1}sds = \frac{1}{5!} \\
A(1, \, (0,1,0))&=\frac{1}{5}\int_{0}^{1}s^3ds
\int_{0}^{1}(1-s)sds= \frac{1}{5!} \\
A(1, \, (0,0,1))&=\frac{1}{5}\,\cdot\,\frac{1}{2}\,\int_{0}^{1}s^3ds\,
\int_{0}^{1}s^2ds=\,\frac{1}{5!}. 
\end{align*}
Therefore, we conclude that 
\begin{equation}\label{ONE}
C_{1,4}(V)=\frac{1}{5!}\left(2\int_{\Rd}V^3(\theta)\F V(\theta)d\theta +\mathcal{E}_{\alpha}(V^2)\right).
\end{equation}

Next, we compute $C_{2,3}(V)$.  This time we have 
\begin{align}\label{C23}
C_{2,3}(V)&=A(2, \, (1,1))\int_{\R^{2d}}\wh{V}(-\theta_1-\theta_2) \wh{V}(\theta_1)\wh{V}(\theta_2)
|\theta_1|^{\alpha}|\theta_1+\theta_2|^{\alpha}\bar{d}\theta^{(2)}\\
\nonumber
&+ A(2, \, (2,0))\int_{\R^{2d}}\wh{V}(-\theta_1-\theta_2) \wh{V}(\theta_1)\wh{V}(\theta_2)
|\theta_1|^{2\alpha}\bar{d}\theta^{(2)} \\ \nonumber
&+A(2, \, (0,2))
\int_{\R^{2d}}\wh{V}(-\theta_1-\theta_2) \wh{V}(\theta_1)\wh{V}(\theta_2)
|\theta_1+\theta_2|^{2\alpha}\bar{d}\theta^{(2)}.
\end{align}
The first integral term 
in the right hand side of above equality  equals
\begin{align*}
&\int_{\Rd}|\theta_1|^{\alpha}\wh{V}(\theta_1)
\left(\int_{\Rd}\wh{V}(-\theta_1-\theta_2)|\theta_1+\theta_2|^{\alpha}\wh{ V}(\theta_2)\bar{d}\theta_2\right)\bar{d}\theta_1 \\ \nonumber
&=\int_{\Rd}|\theta_1|^{\alpha}\wh{V}(\theta_1)
\left(\int_{\Rd}e^{\dot{\iota}\theta\cdot \theta_1}V(\theta)\F V(\theta)d\theta
\right)\bar{d}\theta_1\\ \nonumber
&=\int_{\Rd}V(\theta)\abs{\F V(\theta)}^2d\theta.
\end{align*}

As for  the other two integral terms  in
\eqref{C23}, we claim they both equal 
$$\int_{\Rd}V^2(\theta)\F_2 V(\theta)d\theta,$$
since the third integral term equals
\begin{eqnarray*}
&&\int_{\Rd}\wh{V}(\theta_1)\left(\int_{\Rd}
\wh{V}(\theta_2)\wh{ V}(-\theta_1-\theta_2)|\theta_1+\theta_2|^{2\alpha}\bar{d}\theta_2\right)\bar{d}\theta_1\\
&=&\int_{\Rd}\wh{V}(\theta_1)\left(\int_{\Rd}
e^{\dot{\iota}\theta \cdot \theta_1}V(\theta)\F_2 V(\theta)d\theta\right)\bar{d}\theta_1,
\end{eqnarray*}
whereas the second one equals
\begin{eqnarray*}
&&\int_{\Rd}|\theta_1|^{2\alpha}\wh{V}(\theta_1)\left(\int_{\Rd}
\wh{V}(\theta_2)\wh{ V}(-\theta_1-\theta_2)\bar{d}\theta_2\right)\bar{d}\theta_1\\
&=&\int_{\Rd}|\theta_1|^{2\alpha}\wh{V}(\theta_1)\left(\int_{\Rd}
e^{\dot{\iota}\theta \cdot \theta_1}V^2(\theta)d\theta\right)\bar{d}\theta_1.
\end{eqnarray*}

As for the coefficients in front of the integral terms, we have 
\begin{align*}
A(2,\, (1,1))&= {2 \choose 1,1}\frac{1}{5\cdot 2}\int_{0}^{1}(1-s)s^2ds=\frac{2}{5!} \\
A(2,\, (2,0))&= {2 \choose 2,0}\frac{1}{5}\int_{0}^{1}(1-s)^2sds= \frac{2}{5!}\\
A(2,\, (0,2))&={2 \choose 0,2}
\frac{1}{5\cdot3}\int_{0}^{1}s^3ds=\frac{2}{5!}.
\end{align*}
Therefore,
\begin{equation}\label{TWO}
C_{2,3}(V)=\frac{2!}{5!}\int_{\Rd}V(\theta)|\F V(\theta)|^2d\theta
+\frac{2\cdot 2!}{5!}\int_{\Rd}V^2(\theta)\F_2 V(\theta)d\theta.
\end{equation}

Likewise, we obtain 
\begin{align}\label{THREE}
C_{3,2}(V)&=A(3,3)
\ioRd \wh{V}(-\theta_1)\wh{V}(\theta_1)|\theta_1|^{3\alpha}\bar{d}\theta_1\\ \nonumber
&=\frac{1}{20}\ioRd \F V(\theta) \F_2 V(\theta)d\theta=\frac{3!}{5!}
\mathcal{E}_{\alpha}\left(\F V\right).
\end{align}
Combining \eqref{easyterm}, \eqref{ONE},  \eqref{TWO}, and \eqref{THREE}, we obtain our expression for $C_5(V)$. 
\end{proof}

In the case of the Laplacian  $\alpha=2$, the signs of the coefficients can be use to give information on the poles on the metomorphic extension of the resolvent of the operator $H_V$; see for example \cite[Theorem 4.1]{BanSab}. In particular, it is shown in \cite{BanSab}  that the first five coefficients in the trace expansion are non-negative provided the potential is non-negative.  Our computations above yield a similar result for the first five coefficients of the heat content.  More precisely we have

\begin{cor}\label{nonegativefive}  Suppose $V \in \mathcal{S}(\Rd)$, $V\geq 0$.   Then $C_{\ell}(V) \geq 0$, for $1\leq \ell \leq 5$. 
\end{cor}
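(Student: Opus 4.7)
The plan is to verify $C_\ell(V)\ge 0$ termwise using the explicit formulas from Theorem \ref{5termexp}, where I set $C_1(V):=\int_{\Rd}V(\theta)d\theta$ to match the leading coefficient $-t\int V$ of the expansion. The cases $\ell=1,2$ are immediate since $C_1(V)=\int V\ge 0$ and $C_2(V)=\frac{1}{2}\int V^2\ge 0$. The case $\ell=3$ is also immediate: $C_3(V)=\frac{1}{3!}(\int V^3+\mathcal{E}_\alpha(V))$ is the sum of two manifestly non-negative quantities.

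For $\ell=4$, the only non-obvious summand is the mixed term $\int V^2 \F V$, and for $\ell=5$ the analogous term $\int V^3\F V$ arises. I would treat these uniformly in $0<\alpha\le 2$ by exploiting the Dirichlet form representation. For $0<\alpha<2$, use \eqref{DirichletF2} in bilinear form to write
\begin{equation*}
\int_{\Rd} V^{p}(\theta)\F V(\theta)d\theta=\frac{C_{d,\alpha}}{2}\iogRd{2}\frac{(V^{p}(x)-V^{p}(y))(V(x)-V(y))}{|x-y|^{d+\alpha}}dxdy,
\end{equation*}
and factor $V^p(x)-V^p(y)=(V(x)-V(y))\sum_{j=0}^{p-1}V^{p-1-j}(x)V^{j}(y)$, producing an integrand that is non-negative whenever $V\ge 0$. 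For $\alpha=2$, integration by parts yields $\int V^p(-\Delta V)=p\int V^{p-1}|\nabla V|^{2}\ge 0$. With $p=2$, this handles the mixed term in $C_4$; with $p=2$ and $p=3$ it handles the corresponding mixed terms in $C_5$. The remaining summands $\int V^4$, $\int|\F V|^{2}$, $\int V^5$, $\int V|\F V|^{2}$ are non-negative either trivially or because $V\ge 0$.

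The one genuinely delicate point---and the main obstacle---is the summand $\int V^2\F_2 V$ appearing in $C_5$, whose sign is not accessible by the factoring trick above, since the identity
\begin{equation*}
\int V^2\F_2 V=\frac{C_{d,\alpha}}{2}\iogRd{2}\frac{(V^2(x)-V^2(y))(\F V(x)-\F V(y))}{|x-y|^{d+\alpha}}dxdy
\end{equation*}
contains the oscillating factor $\F V(x)-\F V(y)$, which has no fixed sign. My resolution is to complete the square: by bilinear expansion of $\mathcal{E}_\alpha(f,g)=\int f\F g$ together with self-adjointness of $\F$, one has the algebraic identity
\begin{equation*}
\mathcal{E}_\alpha(V^2+\F V)=\mathcal{E}_\alpha(V^2)+2\int V^2\F_2 V+\mathcal{E}_\alpha(\F V),
\end{equation*}
and the left-hand side is non-negative because $\mathcal{E}_\alpha$ is a positive quadratic form, regardless of the sign of $V$. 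The factor $2$ multiplying $\int V^2\F_2 V$ in the formula for $C_5$ is precisely what this completion requires. Grouping these three summands as $\mathcal{E}_\alpha(V^2+\F V)\ge 0$ and combining with the non-negativity of $\int V^5$, $2\int V^3\F V$, and $\int V|\F V|^{2}$ (established above), we conclude $C_5(V)\ge 0$, completing the proof.
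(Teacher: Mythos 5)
Your proof is correct, but it takes a somewhat different route from the paper's. For $C_4$, the paper notices the algebraic identity
\begin{equation*}
C_4(V)=\frac{1}{4!}\ioRd\abs{V^2(\theta)+\F V(\theta)}^2d\theta,
\end{equation*}
which gives $C_4(V)\ge 0$ for \emph{all} $V$, not only for $V\ge 0$ (a strengthening the paper flags in a remark). You instead establish $\int V^2\F V\ge 0$ via the bilinear Dirichlet-form factorization, which does use $V\ge 0$; this is a valid but strictly weaker conclusion. For $C_5$, the paper groups the five summands $\int V^5$, $2\int V^3\F V$, $\int V|\F V|^2$ into the single term $\frac{1}{5!}\int V|V^2+\F V|^2\ge 0$, whereas you treat each of them separately with the Dirichlet-form/integration-by-parts factoring argument; both work, and yours is perhaps more elementary term by term. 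For the genuinely delicate group $\mathcal{E}_\alpha(V^2)+2\int V^2\F_2 V+\mathcal{E}_\alpha(\F V)$, your identification of it as the positive quadratic form $\mathcal{E}_\alpha(V^2+\F V)$ via bilinearity and self-adjointness of $\F$ is exactly the paper's mechanism (the paper writes it out on the Fourier side as $\int||\theta|^\alpha\wh{V}(\theta)+\wh{V^2}(\theta)|^2|\theta|^\alpha\bar{d}\theta$), and your bilinear-form phrasing is arguably cleaner. In short: same key completing-the-square idea where it matters, a slightly less sharp treatment of $C_4$, and a more piecemeal but perfectly valid handling of the $V$-weighted $C_5$ terms.
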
 
\begin{proof}  With 
$$
C_1{(V)}=\int_{\Rd}V(\theta)d\theta, \,\,\, C_2(V)=\frac{1}{2}\int_{\Rd}V^2(\theta)d\theta,
$$
and 
$$
C_{3}(V)= \frac{1}{3!}\left(\ioRd V^3(\theta)d\theta +\mathcal{E}_{\alpha}(V)\right), 
$$
the assertion trivially holds for these coefficients.  

We can rewrite the expression in Lemma \ref{4thterm} for $C_4(V)$ as 
\begin{eqnarray*}
C_{4}(V)&=&\frac{1}{4!}\ioRd \left(V^4(\theta)+2V^2(\theta)\F V(\theta) +\abs{\F V(\theta)}^2\right)d\theta\\
&=& \frac{1}{4!}\ioRd\abs{V^2(\theta)+\F V(\theta)}^2d\theta
\end{eqnarray*}
and this shows that $C_4(V)\geq 0$.  

For $C_5(V)$,  we re-group the expression given by Lemma \ref{5thterm} as follows.
\begin{eqnarray*}
C_5(V)
&=&\frac{1}{5!}\int_{\Rd}V(\theta)\left(V^4(\theta)+
2V^2(\theta)\F V(\theta)+\abs{\F V(\theta)}^{2}\right)d\theta\\
&+&\frac{1}{5!}\Biggl(\mathcal{E}_{\alpha}\left(\F V\right)+2\int_{\Rd}V^2(\theta)\F_2 V(\theta)d\theta+\mathcal{E}_{\alpha}\left(V^2\right)\Biggr)\\
&=&\frac{1}{5!}\int_{\Rd}V(\theta)\abs{V^2(\theta)+\F V(\theta)}^{2}d\theta\\
&+&\frac{1}{5!}\Biggl(\mathcal{E}_{\alpha}\left(\F V\right)+2\int_{\Rd}V^2(\theta)\F_2 V(\theta)d\theta+\mathcal{E}_{\alpha}\left(V^2\right)\Biggr).
\end{eqnarray*}
If $V$ is non-negative the first of the last two terms above is clearly nonnegative.  We claim the last term is also non-negative.  To show this,  we  use Plancherel's identity for the second term and write the Dirichlet form in terms of the Fourier transform. However, we need to be a little careful here since the Fourier transform of a real valued function may be complex valued. Below we write $Re(z)$  for the real part of the complex number  $z$ and use 
the fact that for real valued functions, $\wh{V}(-\theta)=\overline{\wh{V}(\theta)}$.   We write 
\begin{eqnarray*}
2\int_{\Rd}V^2(\theta)\F_2 V(\theta)d\theta&=&\int_{\Rd}\wh{V^2}(-\theta)|\theta|^{2\alpha}\wh{V}(\theta)\bar{d}\theta+\int_{\Rd}\wh{V^2}(\theta)|\theta|^{2\alpha}\wh{V}(-\theta)\bar{d}\theta\\
&=& \int_{\Rd}\overline{\wh{V^2}}(\theta)|\theta|^{2\alpha}\wh{V}(\theta)\bar{d}\theta+\int_{\Rd}\wh{V^2}(\theta)|\theta|^{2\alpha}\overline{\wh{V}}(\theta)\bar{d}\theta\\
&=& 2\int_{\Rd}|\theta|^{2\alpha}Re\left(\wh{V^2}(\theta){\wh{V}(\theta)}\right)\bar{d}\theta. 
\end{eqnarray*}
Similarly, 
\begin{align*}
\mathcal{E}_{\alpha}\left(V^2\right)=\int_{\Rd}|\theta|^{\alpha}|\wh{V^2}(\theta)|^2\bar{d}\theta \,\,\,\,\, 
\mbox{and} \,\, \,\,\,
\mathcal{E}_{\alpha}\left(\F V\right)=\int_{\Rd}|\theta|^{3\alpha}|\wh{V}(\theta)|^2\bar{d}\theta.
\end{align*}
Putting these identities together gives 
\begin{eqnarray*}
&&\mathcal{E}_{\alpha}\left(\F V\right)+2\int_{\Rd}V^2(\theta)\F_2 V(\theta)d\theta+\mathcal{E}_{\alpha}\left(V^2\right)
\\
&=&\int_{\Rd}\left( |\theta|^{2\alpha}|\wh{V}(\theta)|^2+2|\theta|^{\alpha}
Re\left(\wh{V^2}(\theta){\wh{V}(\theta)}\right) +|\wh{V^2}(\theta)|^2
\right)|\theta|^{\alpha}\bar{d}\theta\\
&=&\int_{\Rd}\abs{|\theta|^{\alpha}\wh{V}(\theta)+\wh{V^2}(\theta) }^2 |\theta|^{\alpha} \bar{d}\theta . 
\end{eqnarray*}
This together with our previous estimate shows that $C_5(V)\geq 0$, for  $V\geq 0$. 
\end{proof}

\begin{rmk}  It is interesting to observe  that for all $V \in \mathcal{S}(\Rd)$ (regardless of the sign), $C_2(V)$ and $C_4(V)$ are nonnegative.  Whether or not this pattern remains as we move up along the even integers is an interesting question.  With some patience   one may be able to test this for $C_6(V)$ and perhaps even $C_8(V)$ but the general term is not clear at all.
\end{rmk}

The probabilistic and Fourier transform techniques of this paper have been used recently in \cite{Acu2}  to prove the existence of decompositions for additive functionals for one dimensional Cauchy  and relativistic Cauchy stable processes.  For more on this line of work, we refer the interested reader to \cite{Acu2}, \cite{Nua}
and references therein.

\end{document}